\renewenvironment{proof}[1][\proofname] {\par\pushQED{\qed}\normalfont\topsep6\p@\@plus6\p@\relax\trivlist\item[\hskip\labelsep\bfseries#1\@addpunct{.}]\ignorespaces}{\popQED\endtrivlist\@endpefalse}
\newtheorem{proposition}{Proposition}[section]
\newtheorem{conjecture}[proposition]{Conjecture}
\newtheorem{lemma}[proposition]{Lemma}
\newtheorem{theorem}[proposition]{Theorem}
\theoremstyle{definition}
\newtheorem{definition}[proposition]{Definition}
\newtheorem{remark}[proposition]{Remark}
\newtheorem{problem}[proposition]{Problem}
\newtheorem*{remark*}{Remark}
\newtheorem*{theorem*}{Theorem}
\newtheorem{claim}[proposition]{Claim}
\renewcommand{\epsilon}{\varepsilon}
\title{Ordered Ramsey numbers of powers of paths}
\author{Ant\'onio Gir\~ao\thanks{Mathematical Institute, University of Oxford, United Kingdom. Research supported by EPSRC grant EP/V007327/1.
Email: \textbf{girao@maths.ox.ac.uk}}
\and
Barnabás Janzer\thanks{Mathematical Institute, University of Oxford, United Kingdom. Research supported by a fellowship at Magdalen College.
Email: \textbf{barnabas.janzer@magd.ox.ac.uk}}
\and
Oliver Janzer\thanks{Department of Pure Mathematics and Mathematical Statistics, University of Cambridge, United Kingdom. Research supported by a fellowship at Trinity College. Email: \textbf{oj224@cam.ac.uk}.}
}
\date{\vspace{-21pt}}
\begin{document}
	\maketitle
	
	\begin{abstract}
	Given two vertex-ordered graphs $G$ and $H$, the ordered Ramsey number $R_<(G,H)$ is the smallest $N$ such that whenever the edges of a vertex-ordered complete graph $K_N$ are red/blue-coloured, then there is a red (ordered) copy of $G$ or a blue (ordered) copy of $H$. Let $P_n^t$ denote the $t$-th power of a monotone path on $n$ vertices. The ordered Ramsey numbers of powers of paths have been extensively studied. We prove that there exists an absolute constant $C$ such that $R_<(K_s,P_n^t)\leq R(K_s,K_t)^{C} \cdot n$ holds for all $s,t,n$, which is tight up to the value of $C$. As a corollary, we obtain that there is an absolute constant $C$ such that $R_<(K_n,P_n^t)\leq n^{Ct}$. These results resolve a problem and a conjecture of Gishboliner, Jin and Sudakov. Furthermore, we show that $R_<(P_n^t,P_n^t)\leq n^{4+o(1)}$ for any fixed $t$. This answers questions of Balko, Cibulka, Král and  Kyn{\v{c}}l, and of Gishboliner, Jin and Sudakov.
	\end{abstract}
	
\section{Introduction}

Given two graphs $G$ and $H$ such that their vertex sets $V(G),V(H)$ are both (totally) ordered, we say that $G$ contains a copy of $H$ if $G$ has a subgraph $H'$ isomorphic to $H$ such that the vertices of $H$ and $H'$ are ordered in the same way -- that is, if $f$ denotes the corresponding graph isomorphism from $H$ to $H'$, then $v<w$ implies $f(v)<f(w)$ for all $v,w\in V(H)$. Given two vertex-ordered graphs $G,H$, the ordered Ramsey number $R_<(G,H)$ is defined to be the smallest positive integer $N$ such that whenever the edges of a vertex-ordered complete graph on $N$ vertices are red/blue-coloured, we can find a red copy of $G$ or a blue copy of $H$. Note that in the case of complete graphs, $R_<(K_r,K_s)$ coincides with the classical Ramsey number $R(K_r,K_s)=R(r,s)$, irrespective of orderings. For general (ordered) graphs, we clearly have $R(G,H)\leq R_<(G,H)\leq R(K_{|G|},K_{|H|})$.

The systematic study of ordered Ramsey numbers of graphs (and hypergraphs) was initiated by Conlon, Fox, Lee and Sudakov~\cite{conlon2017ordered} and, independently, by Balko, Cibulka, Král and  Kyn{\v{c}}l~\cite{balko2015ramsey}. However, the study of ordered (hyper)graph Ramsey numbers for paths has a much longer history, dating back to the origins of Ramsey theory. For example, the proof of the famous Erd\H{o}s--Szekeres theorem~\cite{erdos1935combinatorial} about monotone subsequences gives $R_<(P_s,P_n)=R_<(K_s,P_n)=(s-1)(n-1)+1$, where $P_n$ denotes the path on $n$ vertices (ordered monotonically), and the `cups-caps' theorem in the same paper corresponds to a hypergraph version of this problem. The Ramsey numbers of paths have important connections to other areas, in particular to convex geometry. Erd\H{o}s and Szekeres used their `cups-caps' theorem to prove that every set of $\binom{2n-4}{n-2}+1$ points in the plane in general position contains $n$ points in convex position. Fox, Pach, Sudakov and Suk~\cite{fox2012erdHos} also studied the hypergraph Ramsey problem for tight paths, and gave further geometric applications. Their results were later extended by Moshkovitz and Shapira~\cite{moshkovitz2014ramsey}. Furthermore, Mubayi and Suk~\cite{mubayi2017off} established a connection between ordered Ramsey numbers of tight paths and multicolour Ramsey numbers of cliques.

In this paper we focus on bounding $R_<(G,H)$ where $G$ is a power of a path, and $H$ is either a clique or another path power. Given positive integers $t$ and $n$, the $t$-th power of a path on $n$ vertices, $P_n^t$, is the graph with vertex set $[n]=\{1,\dots,n\}$, where $i$ and $j$ are adjacent if and only if $0<|i-j|\leq t$. (The vertex set of $P_n^t$ will be ordered in the obvious way.) The problem of bounding the Ramsey number of $P_n^t$ is important not only because it gives a natural extension of the celebrated Erd\H os--Szekeres Theorem, but also since every $n$-vertex ordered graph with bandwidth $t$ is a subgraph of $P_n^t$. Answering a question of Conlon, Fox, Lee and Sudakov~\cite{conlon2017ordered}, Balko, Cibulka, Král and  Kyn{\v{c}}l~\cite{balko2015ramsey} proved an upper bound $R_<(P_n^t,P_n^t)=O_t(n^{129t})$. The authors of~\cite{balko2015ramsey} asked to determine the growth rate of $R_<(P_n^t,P_n^t)$. In the case $t=2$, Mubayi~\cite{mubayi2017variants} improved their bound to $R_<(P_n^2,P_n^2)=O(n^{19.5})$, and very recently, Gishboliner, Jin and Sudakov~\cite{gishboliner2023ramsey} proved that $R_<(P_n^t,P_n^t)=O_t(n^{4t-2})$ for all $t$. The authors of~\cite{gishboliner2023ramsey} asked whether in fact $R_<(P_n^t,P_n^t)=O_t(n^{C})$ holds for some constant $C$, and commented that even improving the exponent to $o(t)$ would be interesting. Our first result is a positive answer to their question. In fact, the constant exponent we obtain improves the best known bound in all cases $t>1$.

\begin{theorem}\label{theorem_pathvspath}
	Let $t$ be a positive integer and $\epsilon>0$. Then there is some $C=C(\epsilon,t)>0$ such that, for all positive integers $n$, we have $$R_<(P_n^t,P_n^t)\leq Cn^{4+\epsilon}.$$
\end{theorem}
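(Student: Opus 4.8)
The plan is to derive the bound from the clique--versus--path-power estimate $R_<(K_s,P_n^t)\le R(K_s,K_t)^{C}\cdot n$. Since $R(K_s,K_t)\le (2s)^{t-1}$, for a fixed $t$ this reads $R_<(K_s,P_n^t)\le s^{O_t(1)}\cdot n$. I fix a small $\delta=\delta(\epsilon,t)>0$ and take $s=n^{\delta}$, so that $R_<(K_s,P_n^t)\le n^{1+\epsilon/3}$; this $s$ is much larger than the $2$-colour bipartite Ramsey number of $K_{t,t}$, which is $2^{O(t)}$. (The case $t=1$ is Erd\H{o}s--Szekeres, so assume $t\ge 2$.)

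First I would split the vertices of $K_N$ into consecutive blocks of size $R_<(K_s,P_n^t)$. If some block contains a monochromatic $P_n^t$ we are done; otherwise, applying the clique--path bound in each block (using the colour symmetry $R_<(K_s,P_n^t)=R_<(P_n^t,K_s)$) shows that every block $B_i$ contains both a red clique $R_i$ and a blue clique $T_i$, each of size $s$.

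Second is the connection step, which is where the work lies. Split each clique into a left half and a right half. The basic mechanism is: if $C_1<\dots<C_m$ are red cliques, each of size $\ge 2t$, such that for every $k$ there is a red $K_{t,t}$ between the right half of $C_k$ and the left half of $C_{k+1}$, then choosing $t$ vertices from each half of each $C_k$ yields a red $P_{2t(m-1)}^t$ -- because $t$ consecutive vertices of $P^t$ span a clique and the edges between two consecutive length-$t$ windows of $P^t$ form a subgraph of $K_{t,t}$, so every window of $t+1$ consecutive vertices of the concatenation lies inside a red clique. Hence it would suffice to find a red monotone path of length $\sim n/t$ in the auxiliary $2$-colouring of $K_M$ ($M$ = number of blocks) that records whether two blocks' red cliques are joined by a red $K_{t,t}$; if no such red path exists, Erd\H{o}s--Szekeres yields many blocks whose red cliques are pairwise joined by \emph{blue} $K_{t,t}$'s (via bipartite Ramsey, using $s$ large). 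The catch is that these blue $K_{t,t}$'s sit between red cliques, whose internal edges are the wrong colour. To handle this I would iterate the Erd\H{o}s--Szekeres dichotomy for a bounded number of rounds, passing to ever smaller sub-families of blocks on which first the red cliques have no red $K_{t,t}$ between any pair, then the blue cliques have no blue $K_{t,t}$ between any pair, and so on, and then use the bipartite colourings between the red and the blue cliques (within and across blocks) to locate a long chain of genuinely monochromatic $(t+1)$-cliques of a single colour. Each round costs a polynomial-in-$n$ factor in the number of blocks, so one needs $M=n^{3+o(1)}$ blocks, giving $N=M\cdot R_<(K_s,P_n^t)=n^{3+o(1)}\cdot n^{1+o(1)}=n^{4+o(1)}$; tuning $\delta$ and the error terms gives $R_<(P_n^t,P_n^t)\le Cn^{4+\epsilon}$ with $C=C(\epsilon,t)$.

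The main obstacle, and the heart of the argument, is precisely this colour-asymmetry: however the chain of Erd\H{o}s--Szekeres dichotomies resolves, one must still be able to assemble an honest monochromatic power of a path rather than a union of bipartite scraps, and one must check the window compatibility so that the concatenation realises every edge of $P_n^t$. The block decomposition, the invocation of the clique--path bound, and the final optimisation of parameters are routine.
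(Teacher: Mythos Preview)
Your proposal correctly identifies the central difficulty --- the colour-asymmetry in the connection step --- but does not resolve it; the sentence ``iterate the Erd\H{o}s--Szekeres dichotomy for a bounded number of rounds'' is a hope rather than an argument. Concretely: after one round you restrict to a sub-family of blocks on which no red $K_{t,t}$ joins any two red cliques $R_i,R_j$, and after a second round no blue $K_{t,t}$ joins any two blue cliques $T_i,T_j$. Bipartite Ramsey then yields a blue $K_{t,t}$ between every pair $R_i,R_j$ and a red $K_{t,t}$ between every pair $T_i,T_j$, but those $K_{t,t}$'s sit inside cliques of the \emph{wrong} colour, so they cannot be concatenated into a $P^t$ of either colour. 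Looking at the bipartite colouring between an $R_i$ and a $T_j$ does not help either: a red $K_{t,t}$ from $R_i$ to $T_j$ gives $2t$ vertices $v_1,\dots,v_{2t}$ with a red first $t$-set, a red cross part, but a blue last $t$-set, so already the window $v_2,\dots,v_{t+2}$ fails to be a red clique (the edge $v_{t+1}v_{t+2}$ is blue). There is no evident third invariant on which to iterate further, and hence no reason the process terminates in boundedly many rounds with a usable structure; the claim that $M=n^{3+o(1)}$ blocks suffice is unsupported.

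The paper's proof avoids this trap by never pre-committing to structures of both colours. It proves the stronger bound $R_<(Q_\ell^{t,s},Q_n^{t,s})\le C(\ell n)^{2+\epsilon}$ by induction on $\ell+n$, where $Q_m^{t,s}$ is the blow-up of $P_m$ with $K_s$'s at the two endpoints and $K_t$'s elsewhere. To halve $\ell$, one first passes to $V_1\subseteq[N/2]$ in which every vertex has at least $N/4$ red edges to the right half, finds by induction a red $Q_{\lceil\ell/2\rceil}^{t,s}$ in $V_1$ (or a blue $Q_n^{t,s}$, done), takes its terminal $s$-clique $S$, then restricts to the set $V_2\subseteq[N/2+1,N]$ of vertices with at least $\lambda s$ red neighbours in $S$, and finds a red $Q_{\lfloor\ell/2\rfloor}^{t,s}$ in $V_2$. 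Its initial $s$-clique $S'$ is red-dense to $S$ \emph{by construction}, so K\H{o}v\'ari--S\'os--Tur\'an produces the red $K_{t,t}$ that splices the two halves. The essential difference from your plan is that the second piece is sought only inside the red neighbourhood of the first piece's endpoint, so the connecting $K_{t,t}$ is automatically the right colour and the asymmetry never arises.
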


For the problem of path powers versus cliques, Gishboliner, Jin and Sudakov~\cite{gishboliner2023ramsey} proved that $R_<(P_n^t,K_n)=O_t(n^{t(2t-1)})$ -- this improved earlier bounds of Mubayi and Suk~\cite{mubayi2023ramsey} and of Conlon, Fox, Lee and Sudakov~\cite{conlon2017ordered}. Gishboliner, Jin and Sudakov~\cite{gishboliner2023ramsey} conjectured that the exponent can even be chosen to be linear in $t$. The second main result in this paper verifies this conjecture.

\begin{theorem}\label{theorem_pathvsclique}
	There exists $C>0$ such that, for all positive integers $n>t$,
	$$R_<(P_n^t,K_n)\leq n^{Ct}.$$
\end{theorem}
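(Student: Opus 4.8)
The plan is to prove Theorem~\ref{theorem_pathvsclique} by deducing it from a stronger and more flexible statement: an upper bound of the form $R_<(P_n^t, K_s) \leq R(K_s,K_t)^{C'} \cdot n$ for an absolute constant $C'$, valid for all $s,t,n$. (The abstract announces exactly such a bound with $K_s$ in the role played below by the clique, so I will aim for it directly.) Taking $s = n$ and using the Erd\H{o}s--Szekeres-type bound $R(K_n, K_t) \leq \binom{n+t-2}{t-1} \leq n^{t-1}$ (or the cruder $R(K_n,K_t)\le (n+t)^t \le n^{O(t)}$ valid when $n>t$), one immediately gets $R_<(P_n^t, K_n) \leq n^{C'(t-1) + O(1)} \cdot n \leq n^{Ct}$, which is the claimed bound. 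So the crux is the linear-in-$n$ bound with the right dependence on $R(K_s,K_t)$.

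The main tool will be a "chain decomposition / interval" argument of Erd\H{o}s--Szekeres flavour. Set $N = R(K_s,K_t)^{C'} \cdot n$ and take an arbitrary red/blue colouring of the ordered $K_N$ with no blue $K_s$. I would first partition $[N]$ greedily into consecutive blocks $B_1 < B_2 < \cdots < B_m$, each of size roughly $R(K_s,K_t)$ (more precisely, of a size chosen so that inside each block one can find structure), so that $m \approx R(K_s,K_t)^{C'-1}\cdot n$. Within each block $B_i$, since there is no blue $K_s$, by Ramsey's theorem there is a red $K_t$; but more importantly one wants a \emph{monotone} red structure: by an Erd\H{o}s--Szekeres argument applied to the colouring restricted to a block (or to a cleverly chosen "transversal" across blocks), one extracts from each block a red-clique of size $t$ whose vertices are consecutive \emph{in the relevant induced order}, i.e. a candidate for $t$ consecutive vertices of a power of a path. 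The heart of the matter is then to \emph{stitch} these per-block red $t$-cliques together across consecutive blocks so that the union still contains $P_n^t$: one needs, between block $B_i$ and block $B_{i+1}$, the last $t-1$ vertices selected in $B_i$ and the first $t-1$ vertices selected in $B_{i+1}$ to be pairwise red-connected, and iterating this down the chain of $m$ blocks produces a red $P_n^t$ once $m \cdot t \geq n$, i.e. once $m \gtrsim n$.

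To make the stitching work one cannot choose the red $t$-cliques independently in each block; instead I would run a sequential/greedy selection. Process the blocks left to right, maintaining at each step a red $(t-1)$-clique $T_i \subset B_i$ that will serve as the "interface"; when moving to $B_{i+1}$, restrict attention inside $B_{i+1}$ to the common red-neighbourhood of $T_i$, and within that restricted set (which, because there is no blue $K_s$ and the block was chosen large enough, still has size at least, say, $R(K_{s-(t-1)}, K_t)$ or more) find a red $K_t$, hence a new interface $T_{i+1}$; the first $t-1$ of these together with all of $T_i$ and all previously selected vertices in $B_i$ give $2t-1$ consecutive vertices spanning the required clique edges. The quantitative point is that common neighbourhoods shrink the clique-Ramsey parameter only additively in $t$ at each of $O(t)$-many "layers", and we reset every block; choosing each block of size $R(K_s, K_t)$ and using a few layers per block makes each greedy step succeed, so we only consume a constant (in terms of powers) blow-up $R(K_s,K_t)^{C'}$ over the trivial $n$. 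I expect the main obstacle to be precisely this bookkeeping: arranging the per-block extraction and the cross-block interface so that (a) every clique edge of $P_n^t$ — which may join a vertex near the end of one block to a vertex near the start of the next — is guaranteed red, while (b) the total number of blocks stays $\Omega(n)$ and the required block size stays $O(R(K_s,K_t)^{O(1)})$. A clean way to handle (a) is to let consecutive selected $t$-cliques \emph{overlap} in $t-1$ vertices rather than abut, which is why the greedy "common neighbourhood" step is restricted to a red $(t-1)$-set carried over from the previous block; verifying that the chain of overlapping red $t$-cliques literally contains an ordered copy of $P_n^t$ is then a short direct check.
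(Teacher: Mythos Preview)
Your reduction in the first paragraph---deducing Theorem~\ref{theorem_pathvsclique} from a bound $R_<(P_n^t,K_s)\le R(K_s,K_t)^{C'}n$ by setting $s=n$---is exactly what the paper does, and is fine. The gap is in the proposed proof of the general bound. The greedy interface step does not work: you assert that the common red-neighbourhood of the carried-over clique $T_i$ inside the next block $B_{i+1}$ has size at least $R(K_{s-(t-1)},K_t)$ ``because there is no blue $K_s$ and the block was chosen large enough''. But the absence of a blue $K_s$ constrains \emph{blue}-neighbourhoods (restricting to the blue-neighbourhood of a vertex drops the forbidden blue clique size by one), not red-neighbourhoods. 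A single vertex $v\in T_i$ may be blue to every vertex of $B_{i+1}$; this is perfectly consistent with having no blue $K_s$ provided $B_{i+1}$ contains no blue $K_{s-1}$, which it need not even when $|B_{i+1}|$ is enormous. So the red common neighbourhood of $T_i$ in $B_{i+1}$ can be empty and the chain dies after one step, regardless of block size. This is the central obstruction, not bookkeeping.

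The paper gets around this by abandoning the left-to-right greedy build. In each of $sn$ intervals it first finds an ``$(s,t)$-chain'': $s$ large sets $A_1<\dots<A_s$ (each of size at least $\binom{s+t}{s}^{10}$) threaded together by $t$-cliques (Lemma~\ref{lem:connectedobject}). It then records, for each interval $a$ and each $i$, the length $f_i(a)$ of the longest red $P^t$ ending inside $A_{s-i}^{(a)}$, and applies an Erd\H{o}s--Szekeres-type sequence lemma (Lemma~\ref{lemma_sequence}) to select indices $a_1<\dots<a_s$ with $f_i(a_i)\ge f_i(a_{i+1})$. The crucial gluing step is Lemma~\ref{lemma_findKt+tfroms}: among $s$ large sets with no blue $K_s$, \emph{some} pair spans a red $K_t+K_t$. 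The point is that the no-blue-$K_s$ hypothesis is exploited across all $s$ sets simultaneously (if some vertex is sparse in red to every other set, its blue-neighbours yield $s-1$ sets with no blue $K_{s-1}$, and one inducts)---not between a fixed $(t-1)$-tuple and a single next block as in your scheme.
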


\noindent Theorem \ref{theorem_pathvsclique} is tight up to the value of $C$ since $R_<(P_n^t,K_n)\geq R(K_{t+1},K_n)\geq \tilde{\Omega}_t(n^{\frac{t+2}{2}})$ (see \cite{spencer1977asymptotic}).

In fact, we derive Theorem \ref{theorem_pathvsclique} from an essentially tight bound for $R_<(P_n^t,K_s)$ which is valid for all $s$, $t$ and $n$.
Mubayi and Suk \cite{mubayi2023ramsey} showed that $R_<(P_n^t,K_s)=O_{s,t}(n\log^{s-2} n)$ and conjectured that $R_<(P_n^t,K_s)=O_{s,t}(n)$. Gishboliner, Jin and Sudakov~\cite{gishboliner2023ramsey} proved this conjecture by showing that $R_<(P_n^t,K_s)\leq (24s^3)^{st}n$. They asked to determine the correct dependence of the exponent on $s$ and $t$. We resolve this problem by proving the following bound.

\begin{theorem} \label{thm:pathvscliquegeneral}
    There exists an absolute constant $C$ such that for all $s,t\geq 2$ and $n>t$, we have
    $$R_<(P_n^t,K_s)\leq R(K_s,K_t)^C \cdot n.$$
\end{theorem}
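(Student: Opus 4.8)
The plan is to run an Erdős--Szekeres (chain versus antichain) argument on the red cliques of the colouring, carried out at the scale of intervals rather than of single vertices, with Mirsky's theorem and a Ramsey supersaturation estimate supplying the counting and a structural lemma supplying the substance.

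Write $R=R(K_s,K_t)$, fix a suitable absolute constant $C$, and suppose $K_N$ is red/blue-coloured with no blue $K_s$, where $N\ge R^{C}n$; the goal is a red $P_n^t$. Call a sequence $B_1<B_2<\dots<B_m$ of $t$-element vertex sets (with $\max B_i<\min B_{i+1}$ for each $i$) a \emph{red block-path} if $B_i\cup B_{i+1}$ spans a red $K_{2t}$ for every $i$. A direct check shows that $B_1\cup\dots\cup B_m$, taken with its natural vertex order, then contains a red $P_{mt}^t$: two of its vertices at distance at most $t$ lie in the same block or in two consecutive blocks, and in either case the edge between them is red. So it suffices to produce a red block-path with $m=\lceil n/t\rceil$ blocks. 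Partition $[N]$ into $L$ intervals of equal length (of order a fixed power of $R$, so that $L$ is of order $R^{-O(1)}N$); since no $R$ vertices span a blue $K_s$, each interval contains a red $K_t$, and I select one such clique $T_j$ in each interval. Define $T_i\preceq T_j$ if there is a red block-path from $T_i$ to $T_j$. Concatenating two block-paths at a common block shows that $\preceq$ is transitive, and $T_i\preceq T_j$ with $i\ne j$ forces $\max T_i<\max T_j$, so $\preceq$ is a partial order on $\{T_1,\dots,T_L\}$, and a $\preceq$-chain of length $k$ yields a red block-path with at least $k$ blocks, hence a red $P_{kt}^t$.

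Consequently, if there is no red $P_n^t$ then $(\{T_j\},\preceq)$ has height less than $\lceil n/t\rceil$, so by Mirsky's theorem it decomposes into fewer than $n/t$ antichains, and one of them, $\mathcal A$, has size at least $Lt/n$, which is at least $R^{-O(1)}\cdot R^{C}\cdot t$. The crux is then a structural statement of the following shape: \emph{in any red/blue colouring of $K_N$ with no blue $K_s$, every $\preceq$-antichain among the chosen representatives has size at most $R^{O(1)}$}; granting this with $C$ large enough contradicts the lower bound on $|\mathcal A|$, and adjusting $C$ completes the proof. (If needed, one can feed in instead of an arbitrary antichain a Ramsey-supersaturation count of the representatives; the two routes are interchangeable.)

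I expect this structural statement to be the main obstacle. The trivial half of $\preceq$-incomparability of $T_i,T_j$ (with $T_i$ entirely to the left of $T_j$) only says that $T_i\cup T_j$ is not a red $K_{2t}$, i.e.\ that there is one blue edge between them, and a half-graph-type configuration shows that this alone permits arbitrarily long antichains. Such a configuration, however, conceals a large red clique (hence a red path power), which is why the representative in each interval must not be chosen naively: one wants, say, a red $K_t$ lying inside a maximum red clique of that interval, or a re-selection step whenever a long antichain appears, so that incomparability can be exploited globally (ruling out all intermediate linking cliques, not just $T_i\cup T_j$ being non-red). Once the representatives are pinned down, the remaining task is to convert a long antichain into an honest blue $K_s$: record for each pair $T_i,T_j$ of the antichain which pair of coordinates $(\pi,\sigma)\in[t]^2$ carries a blue edge, apply multicolour Ramsey to the antichain to make this pair constant, and then deal with the case $\pi\ne\sigma$ — where a constant pair yields only a blue half-graph and so an additional idea is required — before reading off a blue $K_s$ from the chosen coordinate vertices. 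The quantitative heart is to keep every step costing only a polynomial factor in $R=R(K_s,K_t)$; a naive execution, working with all red $t$-cliques and losing a factor of $R$ per coordinate, reproduces only the $R^{t}n$-type bounds of Gishboliner--Jin--Sudakov, so the economy gained by coarsening to intervals and by the careful choice of representatives is exactly what is doing the work.
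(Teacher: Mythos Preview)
Your outline is a reasonable skeleton, but it is not a proof: the ``structural statement'' that any $\preceq$-antichain has size at most $R^{O(1)}$ is left open, and you yourself identify the half-graph obstruction and say that a naive execution only recovers the $R^{t}n$-type bound. That is exactly the gap. The suggested fixes (choose each $T_j$ inside a maximum red clique of its interval, or re-select when a long antichain appears, then colour pairs of antichain elements by a blue coordinate pair and Ramsey down) do not close it: pigeonholing on coordinate pairs costs a factor polynomial in $t$, not in $R$, and the residual constant-pair case still only gives a blue half-graph, which carries no blue $K_s$. Nothing in the proposal explains how to convert $\preceq$-incomparability between \emph{single} $t$-cliques into a blue $K_s$ at cost only $R^{O(1)}$.

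The paper's proof runs along the same chain--antichain axis but replaces the single representative $T_j$ in each interval by an $(s,t)$-\emph{chain} of length $s$: a sequence $A_1<\dots<A_s$ of \emph{large} sets (each of size roughly $\binom{s+t}{s}^{10}$), consecutive pairs linked by red $t$-cliques. One then defines $s-1$ functions $f_i(a)=$ (length of the longest red $P^t$ whose last $t$ vertices lie in $A_{s-i}^{(a)}$) and applies a multi-function Erd\H os--Szekeres lemma to find $a_1<\dots<a_s$ with $f_i(a_i)\ge f_i(a_{i+1})$; together with the internal links this forces $f_1(a_1)>f_2(a_2)>\dots>f_{s-1}(a_{s-1})$. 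The payoff is that one now has $s$ large sets $V_i=A_{s-i+1}^{(a_i)}$ with no blue $K_s$, and a separate lemma shows that among any $s$ such sets some pair $V_i,V_j$ carries a red $K_{2t}$. This red $K_{2t}$ extends a path ending in $A_{s-i}^{(a_i)}$ into one ending in $A_{s-j+1}^{(a_j)}$, contradicting the inequalities above. In other words, the device that defeats your half-graph obstruction is precisely to work with $s$ large hubs per interval rather than one $t$-clique, so that the ``antichain'' side produces an $s$-tuple of large sets to which an $s$-partite $K_{2t}$-finding lemma applies; the heavy lifting is then in building the $(s,t)$-chain inside each interval at cost $\binom{s+t}{s}^{O(1)}\le R^{O(1)}$.
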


\noindent This is tight up to the value of $C$, since (as was observed in \cite{gishboliner2023ramsey}), $R_<(P_n^t,K_s)>(R(K_{s+1},K_{t+1})-1)\cdot (n-1)/t$. While our proof of Theorem \ref{thm:pathvscliquegeneral} builds on ideas of Gishboliner, Jin and Sudakov, it contains several new ideas and is considerably shorter than their proof of $R_<(P_n^t,K_s)\leq (24s^3)^{st}n$.

\medskip

The rest of this paper is organized as follows. In Section \ref{sec:pathvspath} we prove Theorem \ref{theorem_pathvspath}. In Section~\ref{sec:pathvscliquegeneral} we prove Theorem \ref{thm:pathvscliquegeneral}, from which Theorem \ref{theorem_pathvsclique} follows immediately by setting $s=n$. In Section~\ref{sec:remarks} we give some concluding remarks and state some open problems arising from our work.

\paragraph{Notation.} We write $|G|$ for the number of vertices of the graph $G$.
	
\section{Path power versus path power}	\label{sec:pathvspath}
	In this section, we prove Theorem~\ref{theorem_pathvspath}. We will use an inductive argument, during which we will find the following, slightly larger ordered graphs. Given positive integers $m$ and $s$, let $Q_m^{t,s}$ denote the graph obtained by taking the monotonically ordered path on $m$ vertices, replacing the endpoints of the path by cliques of size $s$, and replacing the middle $(m-2)$ vertices by cliques of size $t$ (such that the edges between `adjacent' cliques form complete bipartite graphs). The vertices of $Q_m^{t,s}$ are ordered according to which clique they belong to (and arbitrarily inside each clique). Thus, if $s>t$, then $Q_m^{t,s}$ contains $P_m^t$, so Theorem \ref{theorem_pathvspath} follows immediately from the following result.
	
	\begin{lemma}\label{lemma_Q}
		Let $t$ be a positive integer and let $\epsilon>0$. Then there exist some $s=s(\epsilon,t)>t$ and $C=C(\epsilon,t)>0$ such that, for all positive integers $\ell$ and $n$, we have
		$$R_<(Q_\ell^{t,s},Q_n^{t,s})\leq C(\ell n)^{2+\epsilon}.$$
	\end{lemma}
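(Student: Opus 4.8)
The plan is to prove Lemma~\ref{lemma_Q} by induction on $\ell+n$, fixing the clique size $s=s(\epsilon,t)$ and the constant $C=C(\epsilon,t)$ only at the end. Since the ordered Ramsey number is unchanged when the two colours are swapped, one may always assume $\ell\ge n$. The base case is $\ell+n\le L_0$ for a suitable constant $L_0=L_0(\epsilon,t)$: then $Q_\ell^{t,s}$ and $Q_n^{t,s}$ each have at most $sL_0$ vertices, so $R_<(Q_\ell^{t,s},Q_n^{t,s})\le R(K_{sL_0},K_{sL_0})$, a constant we absorb into $C$.

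For the inductive step, let $N=\lceil C(\ell n)^{2+\epsilon}\rceil$ and colour the ordered $K_N$ with no blue $Q_n^{t,s}$; we must find a red $Q_\ell^{t,s}$. I would partition $[N]$ into $M$ consecutive intervals $J_1<\dots<J_M$ of (almost) equal length, where $M$ is a parameter to be tuned (morally a power of $\ell/\ell_1$), and apply the induction hypothesis inside each $J_i$ with the smaller pair $(\ell_1,n)$ for some $\ell_1<\ell$: since there is no blue $Q_n^{t,s}$ anywhere, as long as $N/M\ge C(\ell_1 n)^{2+\epsilon}$, each $J_i$ contains a red copy $A_i$ of $Q_{\ell_1}^{t,s}$. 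Let $L_i,E_i$ be the two size-$s$ endpoint cliques of $A_i$. The point of padding the endpoints up to size $s$ is that it lets the $A_i$ be glued together: if $s\ge s_0(t)$, then for any $i<j$ the bipartite graph between $E_i$ and $L_j$ contains a monochromatic $K_{t,t}$ (bipartite Ramsey, or K\H{o}v\'ari--S\'os--Tur\'an), and when this $K_{t,t}$ is red -- with sides $X\subseteq E_i$, $Y\subseteq L_j$ -- one can splice $A_i$ and $A_j$ together, inserting $X$ and $Y$ as two new size-$t$ interior blocks, into a red copy of $Q_{2\ell_1}^{t,s}$. Iterating, any increasing chain $i_1<\dots<i_k$ all of whose consecutive pairs are red-spliceable yields a red $Q^{t,s}_{m}$ with $m\ge k(\ell_1-1)$, so choosing $k,\ell_1$ with $k(\ell_1-1)\ge\ell$ and $k<M$ disposes of the red case.

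The heart of the argument -- and where I expect the real difficulty to lie -- is the remaining possibility, that no red-spliceable increasing chain of length $k$ exists. Then a Mirsky/Dilworth argument produces a set $I'$ of at least $M/k$ indices that is an antichain for red-splicing, so that for all $i<j$ in $I'$ the bipartite graph between $E_i$ and $L_j$ contains no red $K_{t,t}$ and is therefore blue except for at most $O_t(s^{2-1/t})$ of its $s^2$ edges. The plan is to exploit this abundant inter-block blue structure, together with one further descent of the induction hypothesis inside the (large) unused portions of the intervals $J_i$ with $i\in I'$, to manufacture blue copies of $Q_{n_1}^{t,s}$ positioned so as to be blue-spliceable along these dense blue bipartite graphs, and to chain $\lceil n/(n_1-1)\rceil$ of them into a blue $Q_n^{t,s}$. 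The delicate points -- which I have not fully worked out, and which force $s$ to be taken enormous in terms of $1/\epsilon$ and $t$ (this is exactly what the $\epsilon$ in the exponent buys) -- are that the $A_i$ occupy their intervals ``with the wrong colour'', so the blue copies must be found and placed with care, and that each blue splice now runs through a near-complete rather than complete bipartite graph and so costs a polynomial-in-$s$ factor. The bookkeeping of $M$, $k$, $\ell_1$, $n_1$ and $s$ is then arranged so that the ``(chain length)$\times$(piece length)$\ge$ target'' trade-off costs one factor of $2$ in the exponent on each of the two colours, which is what produces the bound $C(\ell n)^{2+\epsilon}$.
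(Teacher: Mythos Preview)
Your overall architecture---induction on $\ell+n$, building red copies of $Q^{t,s}_{\ell_1}$ in pieces and gluing them via a red $K_{t,t}$ found by K\H{o}v\'ari--S\'os--Tur\'an inside the facing $s$-cliques---matches the paper's strategy exactly. The gap is precisely the part you flag as ``not fully worked out'': your blue case does not close, and as stated cannot be made to close. In your antichain $I'$, the dense blue bipartite graphs you produce sit between $E_i$ and $L_j$, which are \emph{red} cliques (they are the endpoint blocks of the red $A_i$ and $A_j$). These sets therefore cannot serve as endpoint blocks for blue copies of $Q^{t,s}_{n_1}$, and there is no relationship between the blue density you have (only between those specific $s$-sets) and whatever blue copies you might extract by induction from the ``unused portions'' of the intervals. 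So the proposed splice has nothing to splice.

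The paper's proof avoids this branch entirely rather than trying to win it. Instead of building many red pieces and then hoping a spliceable pair exists, it builds two pieces in sequence while \emph{forcing} the gluing density to be red. Split $[N]$ into a left and a right half; by majority, pass to $V_1\subseteq[\text{left}]$ of size $\ge N/4$ in which every vertex sends $\ge N/4$ red edges to the right half. Induct in $V_1$ with parameters $(\lceil\ell/2\rceil,n)$; if no blue $Q^{t,s}_n$ appears, you get a red $Q^{t,s}_{\lceil\ell/2\rceil}$ whose right endpoint clique $S$ lies in $V_1$ and therefore sends many red edges rightwards. Now define $V_2\subseteq[\text{right}]$ as the vertices with at least $\lambda s$ red neighbours in $S$; double counting gives $|V_2|\ge 2^{-(2+\epsilon/2)}N$, which suffices to induct with $(\lfloor\ell/2\rfloor,n)$. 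The resulting red $Q^{t,s}_{\lfloor\ell/2\rfloor}$ has its left endpoint clique $S'\subseteq V_2$, so the red density between $S$ and $S'$ is at least $\lambda$ \emph{by construction}, and K\H{o}v\'ari--S\'os--Tur\'an supplies the red $K_{t,t}$ for the splice. There is no Dilworth step and no blue case at all.

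A secondary issue: your base case ``$\ell+n\le L_0$'' is not enough. The recursion only shrinks one parameter, so when that parameter becomes constant the other may still be large; one needs a bound like $R_<(K_{\ell_0 s},P^{2s}_{sn})=O_{\epsilon,t}(n)$ (as in \cite{gishboliner2023ramsey} or \cite{mubayi2023ramsey}) to close the induction.
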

	\begin{proof}
		Let $s=s(\epsilon,t)$ be sufficiently large (to be specified later). We prove the statement by induction on $\ell+n$. Let $\ell_0=\ell_0(\epsilon)$ be a sufficiently large positive integer (to be specified later). When $\ell\leq \ell_0$, then $R_<(Q_\ell^{t,s},Q_n^{t,s})\leq R_<(K_{\ell_0 s},Q_n^{t,s})\leq R_<(K_{\ell_0 s},P_{sn}^{2s})$. It was shown by Gishboliner, Jin and Sudakov~\cite{gishboliner2023ramsey} that for any positive integers $p, q$, there is some $D=D(p,q)$ such that we have $R_<(K_{p},P_m^q)\leq Dm$ for all $m$. It follows that if $\ell\leq \ell_0$ then $R_<(Q_\ell^{t,s},Q_n^{t,s})\leq D'n$ for some $D'=D'(\epsilon,t)$. (We remark that we did not need to use the bound proved in \cite{gishboliner2023ramsey} -- the weaker bound $R_<(K_p,P_m^q)=O_{p,q}(m\log^{p-2} m)$ from \cite{mubayi2023ramsey}, which has a short proof, would also suffice.) Thus, the result follows if $\ell\leq \ell_0$ (or $n\leq \ell_0$) provided that $C$ is chosen to be sufficiently large.
		
		Now assume that $\ell,n>\ell_0$. Consider a red/blue-coloured complete graph on vertex set $[N]$, where $N\geq C(\ell n)^{2+\epsilon}$. For convenience, we will assume that $N$ is even -- this can be achieved by deleting an arbitrary vertex if $N$ is odd, so we will instead have $N\geq C(\ell n)^{2+\epsilon}-1$. Without loss of generality, we may assume that there is a set $V_1\subseteq[N/2]$ just that $|V_1|\geq N/4$ and each vertex in $V_1$ sends at least $N/4$ red edges to $[N/2+1,N]$. By induction, we have $R_<(Q_{\lceil\ell/2\rceil}^{t,s},Q_n^{t,s})\leq C(\lceil\ell/2\rceil n)^{2+\epsilon}$, which is less than $N/4\geq\frac{1}{4}C(\ell n)^{2+\epsilon}-1$ if $\ell\geq \ell_0$ and $\ell_0$ is sufficiently large (and $C>1$). Thus, $G[V_1]$ contains a red $Q_{\lceil \ell/2\rceil}^{t,s}$ or a blue $Q_{n}^{t,s}$. In the latter case we are done, so let us assume that we have a red copy of $Q_{\lceil \ell/2\rceil}^{t,s}$ in $G[V_1]$. Let $X$ denote its vertex set and let $S$ denote the last $s$ elements of $X$.
		
		Let $\lambda=\frac{2^{\epsilon/2}-1}{2\cdot 2^{\epsilon/2}-1}>0$, and let $V_2$ be the set of vertices in $[N/2+1,N]$ which have at least $\lambda s$ red neighbours in $S$. Note that the total number of red edges between $S$ and $[N/2+1,N]$ is at least $sN/4$, and hence
		$$sN/4\leq |V_2|s+(N/2-|V_2|)\lambda s.$$
		It follows that
		\begin{align*}
			|V_2|&\geq \frac{1/4-\lambda/2}{1-\lambda}N=\frac{1}{2^{2+\epsilon/2}}N\geq \frac{1}{2^{2+\epsilon/2}}\left(C(\ell n)^{2+\epsilon}-1\right).
		\end{align*}
		Hence, by the induction hypothesis, $|V_2|> R_<(Q_{\lfloor \ell/2\rfloor}^{t,s},Q_n^{t,s})$ (if $\ell_0$ is large enough). It follows that in $G[V_2]$ we can find a red $Q_{\lfloor \ell/2\rfloor}^{t,s}$ or a blue $Q_n^{t,s}$. In the latter case we are done, so we may assume that we have a red copy of $Q_{\lfloor \ell/2\rfloor}^{t,s}$. Let us write $Y$ for its vertex set and $S'$ for the first $s$ elements of $Y$.
		
		Note that, by the definition of $V_2$, the number of red edges between $S$ and $S'$ is at least $\lambda s^2$. By the well-known result of Kővári, Sós and Turán~\cite{kHovari1954problem}, if $s$ is sufficiently large in terms of $\epsilon$ and $t$, then there is a red $K_{t,t}$ in the bipartite graph induced by vertex classes $S$ and $S'$. Writing $U\subseteq S$ and $U'\subseteq S'$ for the vertices of this $K_{t,t}$, it follows that there is a red copy of $Q_\ell^{t,s}$ on vertex set $(X\setminus S)\cup U\cup U'\cup (Y\setminus S')$, finishing the proof of the lemma.
	\end{proof}
\begin{proof}[Proof of Theorem~\ref{theorem_pathvspath}]
	The result follows immediately from Lemma~\ref{lemma_Q} since $Q_n^{t,s}$ contains $P_n^t$ provided that $s\geq t$.
\end{proof}

\section{Path power versus clique} \label{sec:pathvscliquegeneral}

In this section, we prove Theorem \ref{thm:pathvscliquegeneral}. As we have already mentioned in the introduction, certain parts of our proof are inspired by arguments of Gishboliner, Jin and Sudakov \cite{gishboliner2023ramsey}. For the reader's benefit we first prove Theorem \ref{thm:pathvscliquegeneral} subject to two lemmas whose proofs are postponed to subsequent sections.

\begin{lemma} \label{lemma_findKt+tfroms}
    Let $s,t\geq 2$ and let $N\geq (\binom{s+t}{s})^{10}$. Let $V_1,\dots,V_s$ be pairwise disjoint sets of size $N$. Let $G$ be a graph with vertex set $V_1\cup \dots \cup V_s$ and no independent set of size $s$. Then there exist some $1\leq i< j\leq s$ and subsets $T_i\subset V_i$ and $T_j\subset V_j$ of size $t$ such that $T_i\cup T_j$ is a clique in $G$.
\end{lemma}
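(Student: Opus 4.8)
The plan is to argue by induction on $s$. The base case $s=2$ is immediate: a graph with no independent set of size $2$ is complete, and since $N\ge\binom{s+t}{s}^{10}\ge t$ any $t$-subsets $T_1\subseteq V_1$, $T_2\subseteq V_2$ form a clique. For the inductive step I first record a simple but crucial point: since $N$ is much larger than the Ramsey number $R(s,t)$, which by the Erd\H os--Szekeres bound satisfies $R(s,t)\le\binom{s+t-2}{s-1}\le\binom{s+t}{s}$, and since the complement $\overline{G}$ restricted to any part $V_i$ has no clique of size $s$ (as $G$ has no independent set of size $s$ inside $V_i$), each part $V_i$ already contains a clique of $G$ of size $t$. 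Moreover the same holds for any subset of a $V_i$ of size at least $R(s,t)$. Thus the whole task is to \emph{couple} two such $t$-cliques lying in different parts into a single clique of $G$.

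I would then run a density dichotomy with a threshold $p=p(r)$ (of order $1/r^2$) depending on the current number of parts $r$. \emph{Case 1: every pair $V_i,V_j$ has $G$-density less than $p$.} Then every pair has $\overline{G}$-density more than $1-p$, and a union bound over the at most $s-1$ other parts produces a vertex $v\in V_1$ whose $\overline{G}$-neighbourhood has size at least $(1-2sp)N$ inside each $V_j$ with $j\ge2$. Writing $W_j=N_{\overline{G}}(v)\cap V_j$, the key observation is that $\overline{G}$ restricted to $W_2\cup\dots\cup W_s$ has no clique of size $s-1$, since such a clique together with $v$ would give a clique of size $s$ in $\overline{G}$. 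So the induction hypothesis applies to the $s-1$ sets $W_2,\dots,W_s$, each of size at least $(1-2sp)N$, and we are done. \emph{Case 2: some pair $V_i,V_j$ has $G$-density at least $p$.} On the bipartite graph of $G$ between $V_i$ and $V_j$ I would apply dependent random choice (the K\H ov\'ari--S\'os--Tur\'an theorem serves the same purpose) to find $U\subseteq V_i$ with $|U|\ge R(s,t)$ such that every $t$-subset of $U$ has at least $R(s,t)$ common $G$-neighbours in $V_j$. Pick a $t$-clique $T_i\subseteq U$ of $G$ (possible as $\overline{G}[U]$ has no $K_s$), let $Y\subseteq V_j$ be the common $G$-neighbourhood of $T_i$, so $|Y|\ge R(s,t)$, and pick a $t$-clique $T_j\subseteq Y$ of $G$ (possible as $\overline{G}[Y]$ has no $K_s$). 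Then $T_i$ and $T_j$ are cliques and all edges between them lie in $G$, so $T_i\cup T_j$ is the desired clique.

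The main obstacle is the quantitative bookkeeping, i.e. verifying that $N\ge\binom{s+t}{s}^{10}$ is enough to push every step through. In Case 1 the induction is applied to sets of size only $(1-2sp)N$, so over the at most $s$ levels of recursion the size shrinks by a product of such factors; choosing $p=p(r)\asymp 1/r^2$ at the level with $r$ remaining parts makes each factor $1-\Theta(1/r)$, so the product telescopes to only a polynomial-in-$s$ loss, which is absorbed comfortably. In Case 2 the constraint is that the density $p\asymp 1/s^2$ guaranteed there must still be large enough for dependent random choice to extract, inside a ground set of size $N$, a set $U$ of size $R(s,t)\le\binom{s+t}{s}$ with the stated common-neighbourhood property; balancing the sampling parameter against $N$ is exactly where a polynomial power of $\binom{s+t}{s}$ (rather than a smaller constant exponent) is consumed. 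Making these two requirements hold simultaneously for \emph{all} $s,t$ — in particular in the delicate regime where $s$ and $t$ are comparable and both large — is the crux, and it may be necessary either to treat extreme ranges of the parameters separately or to sharpen the coupling step so that it, too, reduces the instance rather than finishing outright.
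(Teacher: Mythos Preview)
Your overall skeleton---induction on $s$, with one branch that recurses on the $\overline{G}$-neighbourhoods of a single vertex and another branch that finishes via dependent random choice between two parts---is exactly the paper's. The gap is in how you split into the two cases. A pair-density threshold $p\asymp 1/s^2$ cannot work uniformly: in Case~2 you are then trying to run dependent random choice at density $1/s^2$, and when $t$ is comparable to $s$ this fails quantitatively. Concretely, with $s=t$ and $N=\binom{2s}{s}^{10}$, sampling $k$ vertices gives $\mathbb{E}|U|\ge N p^k = N/s^{2k}$; to kill the $\le N^t(m/N)^k$ bad $t$-sets with $m=R(s,t)$ you need $k$ of order at least $t=s$, but then $N/s^{2k}\le \binom{2s}{s}^{10}/s^{2s}<1$ for large $s$. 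Raising $p$ to a constant would rescue Case~2 but then your union bound in Case~1 (which loses a factor $sp$ per part) destroys the sets. You correctly flag this regime as ``the crux,'' but do not resolve it.

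The paper sidesteps the tension by replacing your pair-density dichotomy with a \emph{vertex-degree} dichotomy at threshold $\tfrac{t}{s+t}$: either some $v\in V_s$ has fewer than $\tfrac{t}{s+t}N$ neighbours in \emph{every} $V_i$ (and then $|V_i\setminus N(v)|\ge\tfrac{s}{s+t}N\ge\binom{s+t-1}{s-1}^{10}$ exactly, so the recursion goes through with no accumulating loss), or every $v\in V_s$ has at least $\tfrac{t}{s+t}N$ neighbours in some $V_i$, whence pigeonhole gives a subset $V_s'\subset V_s$ of size $N/s$ with density at least $\tfrac{t}{s+t}\ge\bigl(\tfrac{t}{s+t}\bigr)^2$ to a fixed $V_i$, and dependent random choice (the paper's Lemma~\ref{lem:findKt+t}) finishes. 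The point is that the threshold $\tfrac{t}{s+t}$ scales with $t$: large enough for DRC to bite when $t$ is large, yet in the recursive branch no union bound over parts is ever needed because you already have the good vertex in hand.
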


\noindent The proof of this lemma, which is fairly standard, will be given in Section \ref{sec:auxlemma}.
The next definition and lemma will play a key role in our proof.
Here and below we write $S<T$ for vertex sets $S,T$ in some ordered graph if $\max(S)<\min(T)$.

\begin{definition}
    Given a vertex-ordered graph $G$ and positive integers $s,t,k$, an $(s,t)$-\emph{chain} of length $k$ consists of sets $A_1<B_{1,1}<\dots <B_{1,p_1}<A_2<B_{2,1}<\dots<B_{2,p_2}<\dots<A_k<B_{k,1}<\dots<B_{k,p_k}\subset V(G)$ such that $p_1,\dots,p_{k-1}\geq 1$ but $p_k$ may be 0, and 
    \begin{itemize}
        \item $B_{i,j}$ is a clique for all $i\in [k], 1\leq j\leq p_i$,
        \item $A_i$ is complete to $B_{i,1}$ for all $i\in [k]$,
        \item $B_{i,j}$ is complete to $B_{i,j+1}$ for all $i\in [k],1\leq j\leq p_i-1$,
        \item $B_{i,p_i}$ is complete to $A_{i+1}$ for all $i\in [k-1]$,
        \item $|A_i|\geq \left(\binom{s+t}{s}\right)^{10}$ for all $i\in [k]$ and
        \item $|B_{i,j}|= t$ for all $i\in [k],1\leq j\leq p_i$.
    \end{itemize}
    See Figure \ref{figure_chain} for an illustration.
\end{definition}

\begin{figure}[h]
\begin{tikzpicture}[thin,
	every node/.style={draw,circle},
    every fit/.style={ellipse,draw,text width=1cm}	]
	
\begin{scope}[start chain=going right,node distance=1.5mm,inner sep=1.3pt]
	\foreach \i in {1,2,...,4}
	\node[%fsnode,
	on chain] (aa\i) {};
\end{scope}
\begin{scope}[xshift=1.2cm,yshift=0cm,start chain=going right,node distance=1mm, inner sep=0pt]
	\foreach \i in {1,2,...,3}
	\node[%fsnode,
	on chain,fill] () {};
\end{scope}
\begin{scope}[xshift=1.8cm,yshift=0cm,start chain=going right,node distance=1.5mm, inner sep=1.3pt]
	\foreach \i in {5,...,5}
	\node[%fsnode,
	on chain] (aa\i) {};
\end{scope}

	% the vertices of B_{1,1}
	\begin{scope}[xshift=3cm,yshift=0cm,start chain=going right,node distance=1.5mm, inner sep=1.3pt]
		\foreach \i in {1,2,3}
		\node[%fsnode,
		on chain] (ba\i) {};
	\end{scope}
	
	% the vertices of B_{1,2}
	\begin{scope}[xshift=4.8cm,yshift=0cm,start chain=going right,node distance=1.5mm,inner sep=1.3pt]
		\foreach \i in {1,2,3}
		\node[%ssnode,
		on chain] (bb\i) {};
	\end{scope}

	% A_2
\begin{scope}[xshift=6.6cm,yshift=0cm,start chain=going right,node distance=1.5mm,inner sep=1.3pt]
	\foreach \i in {1,2,...,4}
	\node[%fsnode,
	on chain] (ab\i) {};
\end{scope}
\begin{scope}[xshift=7.8cm,yshift=0cm,start chain=going right,node distance=1mm, inner sep=0pt]
	\foreach \i in {1,2,...,3}
	\node[%fsnode,
	on chain,fill] () {};
\end{scope}
\begin{scope}[xshift=8.4cm,yshift=0cm,start chain=going right,node distance=1.5mm, inner sep=1.3pt]
	\foreach \i in {5,...,5}
	\node[%fsnode,
	on chain] (ab\i) {};
\end{scope}

	% the vertices of B_{2,1}
\begin{scope}[xshift=9.6cm,yshift=0cm,start chain=going right,node distance=1.5mm, inner sep=1.3pt]
	\foreach \i in {1,2,3}
	\node[%fsnode,
	on chain] (ca\i) {};
\end{scope}
	
	% A_3
\begin{scope}[xshift=11.5cm,yshift=0cm,start chain=going right,node distance=1.5mm,inner sep=1.3pt]
	\foreach \i in {1,2,...,4}
	\node[%fsnode,
	on chain] (ac\i) {};
\end{scope}
\begin{scope}[xshift=12.7cm,yshift=0cm,start chain=going right,node distance=1mm, inner sep=0pt]
	\foreach \i in {1,2,...,3}
	\node[%fsnode,
	on chain,fill] () {};
\end{scope}
\begin{scope}[xshift=13.2cm,yshift=0cm,start chain=going right,node distance=1.5mm, inner sep=1.3pt]
	\foreach \i in {5,...,5}
	\node[%fsnode,
	on chain] (ac\i) {};
\end{scope}	
	
	% the vertices of B_{3,1}
\begin{scope}[xshift=14.4cm,yshift=0cm,start chain=going right,node distance=1.5mm, inner sep=1.3pt]
	\foreach \i in {1,2,3}
	\node[%fsnode,
	on chain] (da\i) {};
\end{scope}

	\node [fit=(aa1) (aa5),label=above:$A_{1}$,text width=1.45cm] {};		
	\node [fit=(ba1) (ba3),label=above:$B_{1,1}$,text width=0.5cm,pattern = north east lines] {};
	\node [fit=(bb1) (bb3),label=above:$B_{1,2}$,text width=0.5cm,pattern = north east lines] {};
	\node [fit=(ab1) (ab5),label=above:$A_{2}$,text width=1.45cm] {};
	\node [fit=(ca1) (ca3),label=above:$B_{2,1}$,text width=0.5cm,pattern = north east lines] {};
	\node [fit=(ac1) (ac5),label=above:$A_{3}$,text width=1.45cm] {};
	\node [fit=(da1) (da3),label=above:$B_{3,1}$,text width=0.5cm,pattern = north east lines] {};			

	\foreach \i in {1,2,3}
	\foreach \j in {1,2,3}
	{\draw (ba\i) to[bend right=50] (bb\j);}

	\foreach \i in {1,2,...,5}
\foreach \j in {1,2,...,3}
{\draw (aa\i) to[bend right=50] (ba\j);
\draw (ab\i) to[bend right=50] (ca\j);
\draw (ac\i) to[bend right=50] (da\j);}

	\foreach \i in {1,2,...,3}
\foreach \j in {1,2,...,5}
{\draw (bb\i) to[bend right=50] (ab\j);
	\draw (ca\i) to[bend right=50] (ac\j);
}
\end{tikzpicture}
\captionsetup{justification=centering}
\caption{An example of an $(s,t)$-chain. Here the chain has length $k=3$, the striped regions $B_{i,j}$ form cliques of size $t=3$, and the sets $A_i$ have size at least $\left(\binom{s+t}{s}\right)^{10}$.}    
\label{figure_chain}
\end{figure}
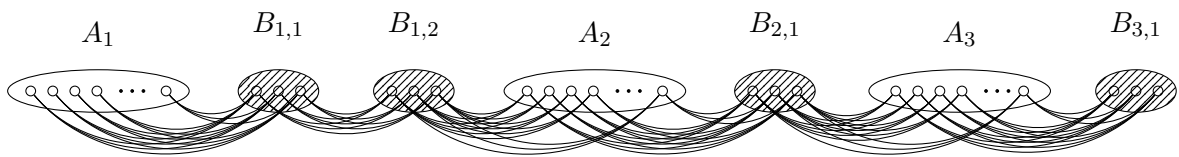

\begin{lemma} \label{lem:connectedobject}
    Let $s,t\geq 2$. Let $N\geq \left(\binom{s+t}{s}\right)^{300}$ and let $G$ be a vertex-ordered graph on $N$ vertices without an independent set of size $s$. Then there exists an $(s,t)$-chain of length $s$ in $G$.
\end{lemma}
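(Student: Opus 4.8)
The plan is to build the chain greedily from left to right, using Lemma~\ref{lemma_findKt+tfroms} as the engine for producing the cliques $B_{i,j}$ and taking each $A_i$ inside the common neighbourhood of the two cliques that will flank it. Throughout, write $b=\binom{s+t}{s}$, so that the hypothesis reads $N\geq b^{300}$ and we must produce a chain of length $s$ whose $A$-sets have size at least $b^{10}$. To begin, I would split $[N]$ into roughly $b^{20}$ consecutive intervals, each of size at least $b^{270}$; this leaves ample slack, and every such interval still has no independent set of size $s$, hence, by the Ramsey bound $R(s,t)\leq \binom{s+t-2}{s-1}\leq b$, contains $t$-cliques in abundance.

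The construction then runs in $s$ rounds, one for each of $A_1,\dots,A_s$. Entering round $i$ we have already placed the chain up to some $t$-clique $B$, and we carry a ``reservoir'' $R$: a subset of the current interval, contained in the common neighbourhood of $B$, of size at least $b^{100}$ (say). In round $i$ we move into later intervals and locate the next stretch of cliques $B_{i,1},\dots,B_{i,p_i}$, the set $A_{i+1}$ (taken to be a size-$b^{10}$ subset of the common neighbourhood of $B_{i,p_i}$ and the first clique of the next round), and a fresh reservoir lying in the common neighbourhood of the clique ending the round. The core of a round is the following sub-task: given a $t$-clique $B$ and a large set $R$ lying after $B$ inside its common neighbourhood, find a $t$-clique $B'$ after $R$ with $B\cup B'$ a clique, $B'$ adjacent to at least $b^{10}$ vertices of $R$, and $B'$ retaining a large common neighbourhood further to the right. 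Here is where Lemma~\ref{lemma_findKt+tfroms} is indispensable, for reasons of efficiency: the naive extension --- repeatedly passing to the common neighbourhood of a highest-degree vertex, growing a clique one vertex at a time --- loses a multiplicative factor of order $s^t$ per $t$-clique, and $s^t$ already dwarfs the entire budget $b^{300}$ when $t$ is comparable to $s$; by contrast one application of Lemma~\ref{lemma_findKt+tfroms} to $s$ pairwise disjoint sets of size $b^{10}$ produces \emph{all} $2t$ vertices of a pair of consecutive cliques simultaneously, at the affordable cost of a factor $b^{10}$. The smaller, routine size losses --- passing from a large ``audience'' of vertices to a $t$-clique with a still-large audience --- are absorbed using the Kővári--Sós--Turán theorem together with $R(s,t)\leq b$.

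That the process really survives all $s$ rounds, rather than stalling, is forced by the same hypothesis on the independence number: each round's sub-task is carried out by applying Lemma~\ref{lemma_findKt+tfroms} to $s$ disjoint pieces of the current interval, which is legitimate precisely because $G$ has no independent set of size $s$, so a round can never get stuck. The genuinely delicate point --- and the one I expect to cost the most work --- is the size bookkeeping: one must guarantee that the reservoir handed from each round to the next never falls below $b^{10}$, while only ever paying factors polynomial in $b$ and never a factor $s^t$. This is exactly why each clique must be chosen so as to have a large neighbourhood on \emph{both} sides at once, and why Lemma~\ref{lemma_findKt+tfroms}, rather than a one-vertex-at-a-time argument, lies at the heart of the proof.
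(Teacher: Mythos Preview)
Your plan has a genuine gap at its core sub-task. You invoke Lemma~\ref{lemma_findKt+tfroms} as the engine that ``produces all $2t$ vertices of a pair of consecutive cliques simultaneously, at the affordable cost of a factor $b^{10}$,'' but that lemma only returns cliques $T_i\subset V_i$ and $T_j\subset V_j$ for \emph{some} $i<j$ with $T_i\cup T_j$ complete; it says nothing about the common neighbourhood of $T_i$ or $T_j$ outside $V_1\cup\dots\cup V_s$. Your greedy scheme needs exactly that extra control: the clique $B'$ closing a round must simultaneously have at least $b^{10}$ common neighbours back in the old reservoir $R$ (to carve out the next $A$-set) \emph{and} a large common neighbourhood to its right (to seed the next reservoir). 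None of the tools you cite --- Lemma~\ref{lemma_findKt+tfroms}, K\H{o}v\'ari--S\'os--Tur\'an, or the bound $R(s,t)\leq b$ --- yields a $t$-clique with a prescribed two-sided neighbourhood at cost merely polynomial in $b$; selecting $B'$ by dependent random choice from one side already costs a factor of order $s^{\Theta(t)}$, the very loss you rightly identify as fatal. The ``delicate point'' you flag is therefore not bookkeeping but the entire difficulty, and the proposal offers no mechanism to resolve it.

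The paper's proof is not greedy and does not use Lemma~\ref{lemma_findKt+tfroms} here at all. For $s>t^2$ it is a global counting argument: one finds $\ell\in[t+2,3t+1]$ and a large induced $G'$ in which $m_\ell(H)\geq m_{\ell-1}(G')\cdot N/s^{25t}$ for every large induced $H\subset G'$; one then assigns to each $\ell$-clique $S$ the length $\chi(S)$ of the longest $(s,\ell-2)$-chain ending in its middle $\ell-2$ vertices, bounds the number of ``good pairs'' $(S,S')$ with $\max(S)=\min(S')$ and $\chi(S)\geq\chi(S')$, and by iteratively halving the range of $\chi$ forces $m_\ell$ to be too small in some large subgraph, a contradiction. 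For $s\leq t^2$ it is divide-and-conquer: by induction build chains of length $\lceil 2\ell/3\rceil$ in a left part and in a right part, then splice their end blocks using Lemma~\ref{lem:findKt+t}. In neither regime is any single $t$-clique ever required to have a large common neighbourhood on both sides at once.
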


Lemma~\ref{lem:connectedobject} is proved in Sections~\ref{section_slarge} and~\ref{section_ssmall}, dealing with the cases $s>t^2$ and $s\leq t^2$ respectively. 
We will also need the following lemma, which is similar to Lemma 4.3 in \cite{gishboliner2023ramsey}.
\begin{lemma}\label{lemma_sequence}
    Let $s$, $n$ and $N$ be positive integers with $N\geq sn$, and let $f_1,\ldots ,f_{s-1}$ be functions from $[N]$ to $[n]$. 
    Then there exist $x_1<x_2<\ldots< x_s$ in $[N]$ such that $f_i(x_i)\geq f_i(x_{i+1})$ for all $1\leq i\leq s-1$. 
\end{lemma}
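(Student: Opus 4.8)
The plan is to run an Erd\H{o}s--Szekeres-style pigeonhole with a potential function that records, at each point, the length of the longest admissible initial segment ending there. Precisely, for $x\in[N]$ I would let $h(x)$ be the largest $k\ge 1$ for which there exist $x_1<x_2<\dots<x_k=x$ in $[N]$ with $f_i(x_i)\ge f_i(x_{i+1})$ for all $1\le i\le k-1$. Since the one-term sequence $(x)$ is always admissible, $h(x)\ge 1$ for every $x$, and (since a segment of length $k$ only uses the functions $f_1,\dots,f_{k-1}$, which exist for $k\le s$) the lemma is exactly equivalent to the existence of some $x$ with $h(x)\ge s$.

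So I would suppose for contradiction that $h(x)\le s-1$ for all $x\in[N]$, and partition $[N]$ into the classes $S_k=\{x\in[N]:h(x)=k\}$ for $k=1,\dots,s-1$. The heart of the argument is the claim that for each $k$ the function $f_k$ is strictly increasing on $S_k$. Granting this, $|S_k|\le n$ because $f_k$ takes values in $[n]$, and hence $N=\sum_{k=1}^{s-1}|S_k|\le(s-1)n<sn\le N$, which is the desired contradiction; note this even leaves slack, so the hypothesis $N\ge sn$ is comfortably enough.

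To prove the claim, take $x<x'$ in $S_k$ and suppose $f_k(x)\ge f_k(x')$. Choose a witnessing sequence $x_1<\dots<x_k=x$ for $h(x)=k$ and append $x'$: the chain $x_1<\dots<x_k<x'$ satisfies all the required inequalities (those among $x_1,\dots,x_k$ hold by choice, and the new one $f_k(x_k)\ge f_k(x')$ is exactly the assumption, using $x_k=x$), so it is an admissible segment of length $k+1$ ending at $x'$. This gives $h(x')\ge k+1$, contradicting $x'\in S_k$; hence $f_k(x)<f_k(x')$ and the claim follows. The only delicate point, and the one I would be careful to state precisely, is the index bookkeeping in this extension step: extending a length-$k$ segment uses the function $f_k$, which is one of the given functions precisely because $k\le s-1$ under the contradiction hypothesis (when $k=s-1$ the extension would produce $h(x')\ge s$, which is exactly what we have assumed away). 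Beyond getting this indexing right, the argument is routine and there is no genuinely hard step once the potential $h$ is set up correctly.
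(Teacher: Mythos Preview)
Your proof is correct. The index bookkeeping you flag is handled properly: since under the contradiction hypothesis $k\le s-1$, the function $f_k$ exists and the extended sequence has length $k+1\le s$, so it is a legitimate witness for $h(x')\ge k+1$.

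Your approach, however, differs from the paper's. The paper proceeds by induction on $s$: it greedily extracts a maximal subsequence $y_1<\dots<y_k$ on which $f_1$ is strictly increasing (so $k\le n$), applies the induction hypothesis for $f_2,\dots,f_{s-1}$ on $[N]\setminus\{y_1,\dots,y_k\}$ to obtain $x_2<\dots<x_s$, and then sets $x_1$ to be the last $y_j$ preceding $x_2$, arguing from the maximality of the $y$-sequence that $f_1(x_1)\ge f_1(x_2)$. Your argument is instead a direct Erd\H{o}s--Szekeres-style pigeonhole via the potential $h$: the level sets $S_k$ have $f_k$ strictly increasing on them, hence $|S_k|\le n$, forcing $N\le(s-1)n$. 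Your version is non-inductive and arguably slicker; it also makes transparent that only $N\ge(s-1)n+1$ is needed, whereas the paper's induction, as written, consumes $n$ elements per step and naturally uses $N\ge sn$ (though it too could be sharpened). The paper's argument, on the other hand, is slightly more constructive in flavour and makes the connection to the greedy proof of the classical monotone-subsequence lemma explicit.
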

\begin{proof}
    The proof is by induction on $s$. 
    For $s=1$ there is nothing to show. Now, let $y_1=1$, and for $i\geq 2$ let $y_i$ be the smallest element in $[y_{i-1}+1,N]$ such that $f_1(y_i)>f_1(y_{i-1})$ (if such a $y_i$ exists). This gives a sequence of order at most $n$, say $y_1<\dots <y_k$ (with $k\leq n$). Applying induction for the functions $f_2,\ldots ,f_{s-1}$ on the set $[N]\setminus \{y_1,\dots,y_k\}$, we find $x_2<x_3<\ldots <x_s$ such that $x_i\in [N]\setminus\{y_1,\dots,y_k\}$ for all $i\in [2,s]$ and $f_i(x_i)\geq f_i(x_{i+1})$ for all $i\in [2,s-1]$.
    
    Let $j\in[k]$ be maximal with $y_j<x_2$, and set $x_1=y_j$. We claim that we have $f_1(x_1)\geq f_1(x_2)$. Indeed, if we had $f_1(x_1)< f_1(x_2)$, then (by the definition of the sequence $y_i$) we would have ($j<k$ and) $y_{j+1}\leq x_2$ and hence $y_{j+1}< x_2$, contradicting the choice of $j$. Thus, we do indeed have $f_1(x_1)\geq f_1(x_2)$, and hence the sequence $x_1,\dots,x_s$ satisfies the required conditions.
\end{proof}

We are now ready to prove Theorem~\ref{thm:pathvscliquegeneral} (subject to the proofs of Lemma~\ref{lemma_findKt+tfroms} and Lemma~\ref{lem:connectedobject}).

\begin{proof}[Proof of Theorem~\ref{thm:pathvscliquegeneral}]
    Let $N=\left(\binom{s+t}{s}\right)^{300}sn$. (A straightforward but tedious computation shows that then $N<R(s,t)^{C}n$ for some absolute constant $C$.) We will show that if $G$ is a graph on vertex set $[N]$ without an independent set of size $s$, then $G$ contains a $P_n^t$. For each $a\in[sn]$, write $X_a=\left[(a-1)\left(\binom{s+t}{s}\right)^{300}+1,a\left(\binom{s+t}{s}\right)^{300}\right]$, so that $|X_a|=\left(\binom{s+t}{s}\right)^{300}$ and $X_1<\dots<X_{sn}$. By Lemma~\ref{lem:connectedobject}, for each $i\in[sn]$, we can find an $(s,t)$-chain of length $s$ in $G[X_a]$, say $A_1^{(a)}<B_{1,1}^{(a)}<\dots <B_{1,p_1^{(a)}}^{(a)}<A_2^{(a)}<B_{2,1}^{(a)}<\dots<B_{2,p_2^{(a)}}^{(a)}<\dots<A_s^{(a)}<B_{s,1}^{(a)}<\dots<B_{s,p_s^{(a)}}^{(a)}$.

    For each $i\in[s-1]$ and $a\in[sn]$, let $f_i(a)$ be the length of the longest (monotone) $t$-th power of a path whose last $t$ vertices belong to $A_{s-i}^{(a)}$. (Note that $|A^{(a)}_{s-i}|>R(t,s)$, so $A^{(a)}_{s-i}$ contains a $t$-clique and thus $f_i(a)$ is well-defined.) If $f_i(a)\geq n$ for some $i\in[s-1]$ and $a\in[sn]$ then we are done, so we may assume that $f_1,\dots,f_{s-1}$ are functions from $[sn]$ to $[n]$. Then, by Lemma~\ref{lemma_sequence}, there exists an increasing sequence $a_1<\dots<a_s$ in $[sn]$ such that $f_i(a_i)\geq f_i(a_{i+1})$ for all $i\in[s-1]$.

    For all $a\in[sn]$ and $i\in [s]$, let us write $\bar{B}_i^{(a)}=B_{i,1}^{(a)}\cup B_{i,2}^{(a)}\cup\dots\cup B_{i,p_i^{(a)}}^{(a)}$. Note that, for all $a\in [sn]$ and $i\in[s-1]$, if $C$ is a clique of size $t$ in $A_{s-i+1}^{(a)}$ and a set $S\subseteq V(G)$ gives a (monotone) $t$-th power of a path ending in $t$ vertices of $A_{s-i}^{(a)}$, then $S\cup \bar{B}_{s-i}^{(a)}\cup C$ also gives a monotone $t$-th power of a path, ending in $t$ vertices of $A_{s-i+1}^{(a)}$. In particular, since $|A_{s-i+1}^{(a)}|>R(t,s)$, we have $f_{i-1}(a)>f_{i}(a)$ for all $a\in[sn]$ and $i\in[2,s-1]$, and hence $f_1(a_1)>f_2(a_2)>\dots>f_{s-1}(a_{s-1})$.

    For each $i\in[s]$, let $V_i=A_{s-i+1}^{(a_i)}$. By Lemma~\ref{lemma_findKt+tfroms}, there exist some $1\leq i<j\leq s$ and subsets $T_i\subseteq V_i$ and $T_j\subseteq V_j$ of size $t$ such that $T_i\cup T_j$ is a clique in $G$. By the definition of $f_i$, there is a $P_{f_i(a_i)}^t$ on some vertex set $S$ such that the last $t$ vertices of $S$ belong to $A_{s-i}^{(a_i)}$. But then $S\cup \bar{B}_{s-i}^{(a_i)}\cup T_i\cup T_j$ gives a $t$-th power of a path of length more than $f_i(a_i)$ and ending in $t$ vertices in $A_{s-j+1}^{(a_j)}$, whence $f_{j-1}(a_j)>f_i(a_i)$. However, we know that $f_i(a_i)>f_{i+1}(a_{i+1})>\dots>f_{j-1}(a_{j-1})\geq f_{j-1}(a_j)$, giving a contradiction.
\end{proof}

\begin{remark}
    A variant of the argument presented above can also be used to give a very short proof of the bound $R_<(P_n^t,K_s)\leq s^{O(st)}n$ that was obtained in \cite{gishboliner2023ramsey}. Indeed, let us sketch a proof of the bound $R_<(P_n^t,K_s)\leq R(sr,s)sn$, where $r=(\binom{s+t}{s})^{10}$. (It is easy to see that this indeed implies $R_<(P_n^t,K_s)\leq s^{O(st)}n$.) Let $N=R(sr,s)sn$ and let $G$ be a graph on vertex set $[N]$ without an independent set of size $s$. Clearly, each interval of length $R(sr,s)$ contains a clique of size $sr$, so we may find sets $X_1<X_2<\dots<X_{sn}$ of size $sr$, each forming a clique. Partition each $X_a$ into $s$ sets $A_1^{(a)}<A_2^{(a)}<\dots<A_s^{(a)}$ of size $r$. Then we may use the same argument as in the proof of Theorem \ref{thm:pathvscliquegeneral}, with the only difference that the sets $B_{i,j}$ are not needed as $A_1^{(a)}\cup A_2^{(a)}\cup \dots \cup A_s^{(a)}$ is a clique.
\end{remark}

\subsection{The proof of Lemma \ref{lemma_findKt+tfroms}} \label{sec:auxlemma}

In this section we prove Lemma \ref{lemma_findKt+tfroms}. We will need the following result here as well as in Section~\ref{section_ssmall}.

\begin{lemma} \label{lem:findKt+t}
    Let $s,t\geq 2$. Let $V_1$ and $V_2$ be disjoint sets of size at least $(\binom{s+t}{s})^{8}$. Let $G$ be a graph with vertex set $V_1\cup V_2$ such that the density of edges between $V_1$ and $V_2$ is at least $(\frac{t}{s+t})^2$ and $G$ has no independent set of size $s$. Then there exist some subsets $T_1\subset V_1$ and $T_2\subset V_2$ of size $t$ such that $T_1\cup T_2$ is a clique in $G$.
\end{lemma}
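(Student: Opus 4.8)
The plan is to prove this by \emph{dependent random choice}, reducing it to two applications of Ramsey's theorem. Write $q=R(t,s)$ and $b=\binom{s+t}{s}$, and record the two elementary facts $q\le\binom{s+t-2}{s-1}\le b$ and, crucially, $\left(t/(s+t)\right)^{2t}=\left(1+s/t\right)^{-2t}\ge b^{-2}$ — the latter because $b=\binom{s+t}{t}\ge\left((s+t)/t\right)^{t}$. The first step is to establish the following claim: there is a set $U\subseteq V_1$ with $|U|\ge q$ such that every $t$-element subset of $U$ has at least $q$ common neighbours in $V_2$. Once this is in hand, the rest is immediate: since $U\subseteq V(G)$ has no independent set of size $s$ and $|U|\ge q=R(t,s)$, there is a clique $T_1\subseteq U$ of size $t$; its common neighbourhood $W\subseteq V_2$ has $|W|\ge q$ and no independent set of size $s$, so it contains a clique $T_2$ of size $t$; and $T_1\cup T_2$ is a clique because $T_1,T_2$ are cliques and $T_2\subseteq W$ is complete to $T_1$. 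Since $T_1\subseteq V_1$ and $T_2\subseteq V_2$, this gives the conclusion of the lemma.

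To prove the claim I would first reduce to the case $|V_1|=|V_2|=:m$: if, say, $|V_1|>|V_2|$, then repeatedly delete from $V_1$ a vertex of minimum degree into $V_2$ (a short computation shows this does not decrease the bipartite density) until both sides have size $m=\min(|V_1|,|V_2|)\ge b^{8}$; let $d\ge (t/(s+t))^2$ be the resulting density. Now sample $x_1,\dots,x_{2t}\in V_2$ uniformly and independently and let $U_0\subseteq V_1$ be their common neighbourhood. By convexity $\EE|U_0|\ge m\,d^{2t}=m(d^t)^2\ge m\,b^{-4}\ge b^{4}$. Letting $Y$ count the $t$-subsets $S$ of $U_0$ with fewer than $q$ common neighbours in $V_2$, and using that each fixed such $S\subseteq V_1$ satisfies $\PP(S\subseteq U_0)<(q/m)^{2t}$, one obtains $\EE Y\le\binom{m}{t}(q/m)^{2t}\le q^{2t}/(t!\,m^{t})\le1$, since $m\ge b^{8}\ge q^{2}$. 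Hence $\EE(|U_0|-Y)\ge b^{4}-1\ge q$, so some choice of the $x_i$ yields $|U_0|-Y\ge q$; deleting one vertex from each of the at most $Y$ bad $t$-subsets then leaves a set $U$ with $|U|\ge q$ in which every $t$-subset has at least $q$ common neighbours in $V_2$, proving the claim.

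The only genuinely delicate point — and the place where the hypotheses are used — is the bookkeeping with the parameters: for large $t$ one has to sample enough vertices that the expected number of bad $t$-sets, $\binom{m}{t}(q/m)^k$, is below $1$, yet few enough that $m\,d^{k}$ still beats $R(t,s)$, and these two demands pull in opposite directions. Choosing $k=2t$ works precisely because of the inequality $(t/(s+t))^{2t}\ge\binom{s+t}{s}^{-2}$ noted above, which trades the possibly minuscule density $d$ for a quantity governed by $b=\binom{s+t}{s}$; the hypothesis $|V_i|\ge b^{8}$ (which is comfortably more than strictly necessary) then absorbs the remaining slack.
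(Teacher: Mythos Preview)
Your proof is correct and follows essentially the same dependent-random-choice argument as the paper: sample $2t$ vertices from $V_2$, take their common neighbourhood in $V_1$, delete a vertex from each ``bad'' $t$-subset, and then apply $R(t,s)$ twice. The only cosmetic differences are that the paper avoids your equal-size reduction by simply assuming $|V_2|\ge|V_1|=N$ and bounding directly, and it uses the threshold $N^{1/2}$ (rather than your $q=R(t,s)$) for the definition of a bad $t$-set; neither change affects the argument.
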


\begin{proof}
    Without loss of generality, assume that $|V_2|\geq |V_1|$. Let $N=|V_1|$. Let us choose $2t$ vertices in $V_2$ at random with replacement, and write $T$ for the set of these vertices. Let $R$ be the common neighbourhood of $T$ inside $V_1$. Call a subset of $R$ of size $t$ \emph{bad} if its common neighbourhood inside $V_2$ has size at most $N^{1/2}$. Then the expected number of bad sets of size $t$ in $R$ is at most $N^t\cdot (\frac{N^{1/2}}{|V_2|})^{2t}\leq N^t\cdot (N^{-1/2})^{2t}=1$. Delete one vertex from each bad $t$-set in $R$ to obtain a subset $R'$. Note that $\mathbb{E}[|R'|]\geq \mathbb{E}[|R|]-1$. On the other hand, since the density of edges between $V_1$ and $V_2$ is at least $(\frac{t}{s+t})^2$, by Jensen's inequality, $\mathbb{E}[|R|]\geq ((\frac{t}{s+t})^{2})^{2t} N=(\frac{t}{s+t})^{4t} N$. Moreover, $\binom{s+t}{t}\geq (\frac{s+t}{t})^{t}$, so $N\geq (\frac{s+t}{t})^{8t}$ and therefore $\mathbb{E}[|R|]\geq N^{1/2}$. It follows that there exists an outcome in which $|R'|\geq N^{1/4}$. Since $N^{1/4}\geq R(s,t)$ and $G$ has no independent set of size $s$, $R'$ contains a clique $T_1$ of size $t$. By the construction of $R'$, $T_1$ is not a bad set, so its common neighbourhood inside $V_2$ has size at least $N^{1/2}$. Writing $S$ for this common neighbourhood, we can find a clique $T_2$ of size $t$ in $S$ (since $N^{1/2}\geq R(s,t)$ and $G$ has no independent set of size $s$). Then $T_1\cup T_2$ is a clique, completing the proof.
\end{proof}

\begin{proof}[Proof of Lemma \ref{lemma_findKt+tfroms}]
    We use induction on $s$. The statement is trivial for $s=2$. Now assume that there exists some $v\in V_s$ such that $v$ sends at most $\frac{t}{s+t} N$ edges to each $V_i$, $1\leq i\leq s-1$. For each $1\leq i\leq s-1$, let $V_i'$ be the set of non-neighbours of $v$ inside $V_i$. Note that $|V_i'|\geq \frac{s}{s+t}\cdot (\binom{s+t}{s})^{10}\geq (\binom{s+t-1}{s-1})^{10}$. Observe that $G[V_1'\cup \dots \cup V_{s-1}']$ has no independent set of size $s-1$. Hence, by the induction hypothesis, there exist some $1\leq i< j\leq s-1$ and subsets $T_i\subset V_i'$ and $T_j\subset V_j'$ of size $t$ such that $T_i\cup T_j$ is a clique in $G$.

    So assume that for every $v\in V_s$, there exists some $1\leq i\leq s-1$ such that $v$ sends at least $\frac{t}{s+t} N$ edges to $V_i$. Without loss of generality, we may assume that there exists $V_{s}'\subset V_s$ of size at least $N/s\geq (\binom{s+t}{s})^{8}$ such that each $v\in V_s'$ sends at least $\frac{t}{s+t}N$ edges to $V_1$.
    Then, by Lemma~\ref{lem:findKt+t} applied to the sets $V_1$ and $V_s'$, there exist subsets $T_1\subset V_1$ and $T_s\subset V_s'$ of size $t$ such that $T_1\cup T_s$ is a clique in $G$.
\end{proof}

\subsection{When $s$ is large compared to $t$}\label{section_slarge}
In this section we prove Lemma~\ref{lem:connectedobject} in the case $s>t^2$. Note that, since $\binom{s+t}{s}\geq \left(\frac{s+t}{t}\right)^t\geq \left(s^{1/2}\right)^t$, it is enough to prove the following lemma.
\begin{lemma} \label{lem:slarge}
    \sloppy Let $t\geq 2$ and $s>t^2$ be positive integers. Let $N\geq s^{150t}$ and let $G$ be a vertex-ordered graph on $N$ vertices without an independent set of size $s$. Then there exists an $(s,t)$-chain of length $s$ in $G$.
\end{lemma}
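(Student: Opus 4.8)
The plan is to build the $(s,t)$-chain greedily, one "block" at a time, using the absence of a large independent set to guarantee clique-connections. The key structural fact I would exploit is that in a graph with no independent set of size $s$ on $M$ vertices, a \emph{Ramsey-type} argument forces many vertices into cliques of size roughly $t$; more precisely, any set of at least $R(s,t)\le s^{O(t)}$ vertices contains a $t$-clique. Since $N\ge s^{150t}$ is enormously larger than $R(s,t)$, there is room to iterate. First I would set up the target: I want to produce $A_1<B_{1,1}<\dots<A_s<B_{s,1}<\dots<B_{s,p_s}$, where each $A_i$ is large (size at least $(\binom{s+t}{s})^{10}\le s^{10t}$, say) and each $B_{i,j}$ is a $t$-clique, with the consecutive complete-bipartite adjacencies. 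The natural way to find the $A_i$'s is to split $[N]$ into $s$ consecutive intervals of size $N/s\ge s^{149t}$; this already handles the size requirement and the ordering, and the real work is inserting the "bridges" $B_{i,j}$ between consecutive intervals so that the complete-bipartite conditions hold.

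The main step is therefore a \emph{connection lemma}: given two large consecutive sets $A$ (on the left) and $A'$ (on the right), with an arbitrary but large set of vertices $W$ strictly between them, find a short path of $t$-cliques $B_1<\dots<B_p$ inside $W$ such that $A$ is complete to $B_1$, each $B_j$ complete to $B_{j+1}$, and $B_p$ complete to $A'$. To get the first clique $B_1$ adjacent to all of $A$ is too much to ask directly, so instead I would pass to subsets: iteratively, replace $A$ by the common neighbourhood inside $A$ of a freshly chosen small clique in $W$, shrinking $A$ by a controlled multiplicative factor each time. This is exactly the flavour of Lemma \ref{lem:findKt+t} (random sampling + deleting "bad" $t$-sets whose common neighbourhood is too small). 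Concretely: take $2t$ random vertices from the left part of $W$, look at their common neighbours in $A$; by the density/no-independent-set hypothesis this common neighbourhood is still of polynomial size; discard $t$-subsets with small forward common neighbourhood; extract a $t$-clique $B_1$ from what remains (possible since the surviving set has size $\ge R(s,t)$), and move on with $A$ replaced by a still-large common neighbourhood of $B_1$ that lies further to the right. Because each $B_j$ is forced to have large common neighbourhood going forward, the process can be repeated, and crucially the number of bridges $p$ needed is bounded (each step loses only a $\mathrm{poly}(s)$ factor in set size, and we have $s^{150t}$ to spend), so we can afford $p\le s$ (indeed far fewer) and still reach $A'$.

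The delicate points, and where I expect to fight, are (i) bookkeeping the exponents: I must ensure that after splitting into $s$ intervals and then performing up to $O(s)$ connection steps, each costing a polynomial-in-$s$ factor of vertices, the remaining set at every stage is still at least $(\binom{s+t}{s})^{10}$ and still at least $R(s,t)$ — this is where the specific exponent $150t$ matters and why the hypothesis $s>t^2$ is invoked (it makes $\binom{s+t}{s}\ge s^{t/2}$, converting binomial bounds into clean powers of $s$); (ii) the \emph{ordering} constraints: every $B_{i,j}$ and every shrunk copy of $A_i$ must lie in the correct interval, so the random sampling in the connection lemma must be done only on the portion of $W$ that is to the right of everything chosen so far and to the left of the next $A$, which forces me to reserve, in advance, a sequence of $O(s)$ disjoint "slots" between consecutive intervals; (iii) handling the case $s>t^2$ versus general $s$ — here $s$ large makes $R(s,t)$ the dominant quantity but also gives enormous slack, so I would not expect sharpness issues, only careful constant-chasing. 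The cleanest write-up is probably: prove the connection lemma as a standalone claim (modelled on Lemma \ref{lem:findKt+t}), then assemble the chain by applying it $s-1$ times between the $s$ reserved intervals, checking the size invariant by an explicit but routine induction on the number of completed blocks.
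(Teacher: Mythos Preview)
Your greedy interval-splitting approach has a genuine gap: the proposed connection lemma is false. Take $G$ on $[N]$ to be two cliques $A=[N/2]$ and $A'=[N/2+1,N]$ with no edges between them; this has no independent set of size $3$, yet no $t$-clique $B$ with $A^*<B<(A')^*$ can be complete to nonempty $A^*\subseteq A$ and $(A')^*\subseteq A'$, since $B\subseteq A\cup A'$ and a clique meeting both halves would require a crossing edge. The point is that ``no independent set of size $s$'' gives \emph{no} density between two prescribed sets, so the random-sampling step you borrow from Lemma~\ref{lem:findKt+t} cannot even start --- that lemma carries an explicit density hypothesis which you do not have and cannot derive from independence alone. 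In such examples the chain does exist (it lives entirely inside one half), but your algorithm, having committed the $A_i$'s to $s$ fixed slots spanning all of $[N]$, will attempt to bridge an empty cut and fail.

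The paper's proof for the regime $s>t^2$ is completely different and never attempts to localise the chain. It first stabilises a clique size $\ell\in[t+2,3t+1]$ and passes to a large induced $G'$ in which $m_\ell(H)\ge m_{\ell-1}(G')\cdot N/s^{25t}$ for every large induced $H\subseteq G'$. It then assigns to each $\ell$-clique $S$ the value $\chi(S)=$ maximal length of an $(s,\ell-2)$-chain terminating in the middle $\ell-2$ vertices of $S$, assumes $\chi<s$ throughout, and counts ``good pairs'' of $\ell$-cliques $S,S'$ overlapping in one vertex with $\chi(S)\ge\chi(S')$; too many good pairs through a common $(\ell-1)$-clique pair would extend a chain, which bounds their number. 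An iterated halving of the value-range of $\chi$ over about $\log_2 s$ rounds then drives $m_\ell$ of the final subgraph below the stabilised lower bound, a contradiction. Your divide-and-connect picture is closer in spirit to what the paper does in the complementary regime $s\le t^2$, but even there the argument carries a second induction parameter: when the left and right halves are sparse between them, one finds a vertex of small forward degree, passes to its non-neighbourhood, and recurses with the independence bound reduced by one. Without that escape hatch the connection step cannot be guaranteed.
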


\begin{proof}
Given a positive integer $\ell$ and a subgraph $H$ of $G$, let  $m_{\ell}(H)$ denote the number of cliques of size $\ell$ in $H$.
	\begin{claim}\label{claim_findingell}
		There exist some $\ell\in[t+2,3t+1]$ and an induced subgraph $G'$ of $G$ with $|G'|\geq N/s^{6t}$ such that if $H$ is an induced subgraph of $G'$ with $|H|\geq |G'|/s^{3}$, then 
		$m_\ell(H)\geq m_{\ell-1}(G')\cdot N/s^{25t}$.
	\end{claim}
	
	\begin{proof}
		We define a sequence of induced subgraphs $G_i$ of $G$ with $|G_i|\geq N/s^{3i}$. We start with $G_0=G$. Having constructed $G_i$, if $G_i$ has an induced subgraph $H$ with $|H|\geq |G_i|/s^{3}$ and $m_{t+i+2}(H)< m_{t+i+1}(G_i)\cdot N/s^{25t}$, then put $G_{i+1}=H$.  
		Otherwise terminate the procedure. Clearly, the claim follows if the process terminates after less than $2t$ steps, as the final $G_i$ will satisfy the conditions with $\ell=t+i+2$. On the other hand, if the process runs for at least $2t$ steps, then $m_{3t+1}(G_{2t})\leq m_{t+1}(G) N^{2t}/s^{50t^2}\leq N^{3t+1}/s^{50t^2}$. Since $|G_{2t}|\geq N/s^{6t}>2R(3t+1,s)$, and all induced subgraphs of $G_{2t}$ of order $R(3t+1,s)$ contain a $K_{3t+1}$, we have, by double counting, 
		\begin{align*}
			m_{3t+1}(G_{2t})\geq \frac{\binom{|G_{2t}|}{R(3t+1,s)}}{\binom{|G_{2t}|}{R(3t+1,s)-3t-1}}\geq \left(\frac{|G_{2t}|/2}{R(3t+1,s)}\right)^{3t+1}\geq\left(\frac{N}{(3s)^{3t+1} s^{6t}}\right)^{3t+1} >\frac{N^{3t+1}}{s^{50t^2}},
		\end{align*}
		giving a contradiction.
	\end{proof}

 Let $G'$ and $\ell$ be as in the claim above, and let $N'=|G'|\geq N/s^{6t}$. Without loss of generality, $G'$ has vertex set $[N']$ (with the usual ordering). Given a subset $S\subseteq [N']$ of size $\ell$ which induces a clique, let  $\chi(S)$ be the largest $k$ such that there is an $(s,\ell-2)$-chain of length $k$ in $G'$ ending in the middle $\ell-2$ vertices of $S$ as the final $(\ell-2)$-clique $B_{k,p_k}$ of the chain. (If no such $(s,\ell-2)$-chain ends in those $\ell-2$ vertices, we put $\chi(S)=0$.) If there is some $S$ with $\chi(S)\geq s$, we are done, now assume that no such $S$ exists. We will show that we can find many $S,S'$ (both inducing a clique of order $\ell$) such that $\max(S)=\min(S')$ and $\chi(S)\geq\chi(S')$ -- we will call such a pair of cliques $(S,S')$ a \emph{good pair} -- and use this to obtain a contradiction.

Given an induced subgraph $H$ of $G'$, let us write $g(H)$ for the number of good pairs in $G'$ which are contained in $H$. Note that if $S$ induces a $K_\ell$ in $H$ with $\chi(S)=i$, the corresponding $(s,\ell-2)$-chain of length $i$ (ending in the middle $\ell-2$ vertices of $S$) may have vertices outside $H$. The following claim shows that we cannot have too many good pairs contained in $H$.
	
	\begin{claim}\label{claim_numberofgoodpairs}
		If $H$ is an induced subgraph of $G'$, then $g(H)< 
		m_{\ell-1}(H)^2\cdot (4s)^{10\ell}$.
	\end{claim}
	\begin{proof}
		Assume that $g(H)\geq m_{\ell-1}(H)^2\cdot (4s)^{10\ell}$. Then there exist two disjoint sets $S,S'$ in $V(H)$, both of size $\ell-1$ and inducing a clique, such that $\max(S)<\min(S')$ and the set
		\begin{align*}
		    Z=\{z\in V(H): &\max(S)<z<\min(S'), \\&S\cup\{z\} \textnormal{ and } \{z\}\cup S'\textnormal{ are cliques with } \chi(S\cup\{z\})\geq\chi(\{z\}\cup S')\}
		\end{align*}
		has size at least $(4s)^{10\ell}\geq \left(\binom{s+\ell-2}{s}\right)^{10}$. It follows that any $(s,\ell-2)$-chain ending at the last $\ell-2$ vertices of $S$ can be extended to a longer $(s,\ell-2)$-chain ending at the first $\ell-2$ vertices of $S'$. But then, taking an arbitrary $z\in Z$, we have  $\chi(\{z\}\cup S')>\chi(S\cup\{z\})$, which is a contradiction.
	\end{proof}
	
	We now use Claim~\ref{claim_numberofgoodpairs} to prove the following claim, which will then be used iteratively (about $\log_2 s$ times) to pass to large induced subgraphs of $G'$ where most of the $\ell$-cliques $S$ have $\chi(S)$ coming from an increasingly small set of values in $[0,s-1]$.
	\begin{claim}\label{claim_2/5}
		Let $H$ be an induced subgraph of $G'$ and let $I\subseteq [0,s-1]$. Then there is an induced subgraph $H'$ of $H$ and some $J\subseteq I$ such that $|H'|\geq \frac{2}{5}|H|$, $|J|\geq \lceil |I|/2\rceil$, and $H'$ contains at most $3 (4s)^{5\ell}N^{1/2}m_{\ell-1}(H)$ subsets $S$ of size $\ell$ inducing a clique and having $\chi(S)\in J$.		
	\end{claim}
	\begin{proof}
		Let $J_1$ be the set of $\lceil |I|/2\rceil$ smallest elements of $I$, and let $J_2$ be the largest $\lceil |I|/2\rceil$ elements of $I$. Note that $\max(J_1)\leq \min(J_2)$. We may assume that $|H|\not =0$. Let $q=\frac{3 (4s)^{5\ell}}{|H|^{1/2}}m_{\ell-1}(H)$, let $$X_1=\{x\in V(H): x\textnormal{ is the first vertex of at least } q \textnormal{ cliques } S \textnormal{ in } H \textnormal{ of order } \ell \textnormal{ with } \chi(S)\in J_1\}$$
		and
		$$X_2=\{x\in V(H): x\textnormal{ is the last vertex of at least } q \textnormal{ cliques } S \textnormal{ in } H \textnormal{ of order } \ell \textnormal{ with } \chi(S)\in J_2\}.$$
		
		By taking $H'=H[V(H)\setminus X_i]$ and $J=J_i$, we are done if $|X_i|\leq \frac{3}{5}|H|$ for some $i\in \{1,2\}$, since then $H'$ contains at most $|H'|q\leq 3 (4s)^{5\ell}N^{1/2}m_{\ell-1}(H)$ cliques $S$ of order $\ell$ with $\chi(S)\in J$. On the other hand, if $|X_1|\geq\frac{3}{5}|H|$ and $|X_2|\geq \frac{3}{5}|H|$, then $|X_1\cap X_2|\geq |H|/5$, which immediately gives
		$$g(H)\geq \frac{|H|}{5}q^2=\frac{|H|}{5}\cdot \left(\frac{3 (4s)^{5\ell}}{|H|^{1/2}}m_{\ell-1}(H)\right)^2\geq m_{\ell-1}(H)^2\cdot (4s)^{10\ell},$$
		contradicting Claim~\ref{claim_numberofgoodpairs}.
	\end{proof}
		We are now ready to finish the proof of our lemma. We define a sequence of graphs $H_i$ and sets $I_i\subseteq [0,s-1]$ as follows. Set $H_0=G'$, and let $I_0=[0,s-1]$. Having defined $H_i$ and $I_i$, we apply Claim~\ref{claim_2/5} to $(H_i,I_i)$ to get some $H'$ and $J$, and we set $H_{i+1}=H'$, $I_{i+1}=I_i\setminus J$. We terminate this procedure when we get $I_i=\emptyset$ for some $i$. Let $j$ be the final value of $i$, so that $I_j=\emptyset$.
	
	Note that for all $i<j$ we have $|I_{i+1}|\leq |I_i|/2$, so the process terminates in $j\leq \log_2(s)+1$ steps. As $|H_{i+1}|\geq \frac{2}{5}|H_i|$ for all $i<j$, we get
	$$|H_j|\geq \left(\frac{2}{5}\right)^{j}|H_0|> \left(\frac{1}{4}\right)^{\log_2 s+1}|H_0|=\frac{1}{4s^2}|H_0|>\frac{1}{s^3}|H_0|.$$
	
	Hence, by the definition of $H_0$, we have
    \begin{equation}
        m_{\ell}(H_j)\geq  m_{\ell-1}(H_0)\cdot N/s^{25t}. \label{eqn:lower bound for m}
    \end{equation}
	On the other hand, by the definition of the sets $I_i$, for each $i<j$ we have that $H_j$ contains at most $3(4s)^{5\ell}N^{1/2}m_{\ell-1}(H_0)$ cliques $S$ of size $\ell$ with $\chi(S)\in I_i\setminus I_{i+1}$. Since $\bigcup_{i=0}^{j-1}(I_{i}\setminus I_{i+1})=[0,s-1]$ and $j\leq s$, we get that
	\begin{equation}
		m_{\ell}(H_j)\leq s\cdot 3(4s)^{5\ell}N^{1/2}m_{\ell-1}(H_0). \label{eqn:upper bound for m}
	\end{equation}
	Comparing equations (\ref{eqn:lower bound for m}) and (\ref{eqn:upper bound for m}) (and noting that $m_{\ell-1}(H_0)\not =0$ as $|H_0|>R(\ell-1,s)$), we get $N/s^{25t}\leq (4s)^{5\ell+1}N^{1/2}<s^{10\ell+2}N^{1/2}$, whence
	$$N\leq s^{50t+20\ell+4}<s^{150t},$$
	giving a contradiction.
\end{proof}

\subsection{When $s$ is not too large compared to $t$}\label{section_ssmall}
To complete the proof of Lemma~\ref{lem:connectedobject}, it is left to deal with the case $s\leq t^2$, which follows from the following lemma by taking $\ell=r=s$.

\begin{lemma}
Let $s$ and $t$ be positive integers with $s\leq t^2$, and let $\ell,r$ be positive integers with $\ell\leq s$ and $2\leq r\leq s$. Let $N= \left(\binom{t+s}{t}\binom{t+\ell}{t}\binom{t+r}{t}\right)^{50}$. Then whenever $G$ is a graph on vertex set $[N]$ without an independent set of size $r$, then there exists an $(s,t)$-chain of length $\ell$ in $G$.
\end{lemma}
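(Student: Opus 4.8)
The plan is to prove this by a double induction, decreasing the two parameters $\ell$ and $r$; the factor $\binom{t+s}{t}$ in $N$ is a fixed overhead that the recursion never touches (it records the required size $\binom{s+t}{s}^{10}$ of the blocks $A_i$ and the sizes demanded by Lemma~\ref{lem:findKt+t}, and it dominates both). Both base cases are trivial. If $\ell=1$ we take $A_1$ to be the first $\binom{s+t}{s}^{10}$ vertices and $p_1=0$; since $p_1$ is allowed to be $0$ there is nothing to check. If $r=2$ then $G$ is a complete graph, so for $N$ as large as ours we may take $\ell$ blocks of size $\binom{s+t}{s}^{10}$ separated by $\ell-1$ single $t$-cliques, and all completeness conditions hold automatically.

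For the inductive step ($\ell\ge 2$, $r\ge 3$) I would first try to reduce $r$. If some vertex $v$ has at least $N\cdot\big(r/(t+r)\big)^{50}$ non-neighbours, then $G$ restricted to the non-neighbourhood of $v$ has no independent set of size $r-1$ and has at least $\big(\binom{t+s}{t}\binom{t+\ell}{t}\binom{t+r-1}{t}\big)^{50}$ vertices, so the induction hypothesis (with $r-1$ in place of $r$, and the same $\ell$) already produces an $(s,t)$-chain of length $\ell$. Hence we may assume that every vertex has more than $N\big(1-(r/(t+r))^{50}\big)$ neighbours; call this the \emph{dense} regime. In this regime the common neighbourhood of any $t$-clique has size at least $N\big(1-t(r/(t+r))^{50}\big)$, and a standard averaging over random $t$-subsets lets one find $t$-cliques with large common neighbourhoods in prescribed regions; where a single bipartite density is too weak to force clique-to-clique completeness directly, one upgrades it using Lemma~\ref{lem:findKt+t} (applied with its parameter ``$s$'' taken to be $r$), possibly chaining several intermediate cliques together rather than one.

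Next I would reduce $\ell$ by peeling off the first segment. The aim is to find, near the start of the order, a monotone sequence of $t$-cliques $B_{1,1}<\dots<B_{1,p_1}$ (consecutive ones complete) such that $B_{1,1}$ has at least $\binom{s+t}{s}^{10}$ common neighbours to its left — these form $A_1$ — while the common neighbourhood $W$ of $B_{1,p_1}$ to its right has size at least $\big(\binom{t+s}{t}\binom{t+\ell-1}{t}\binom{t+r}{t}\big)^{50}$; then the induction hypothesis inside $G[W]$ yields an $(s,t)$-chain of length $\ell-1$ whose first block is automatically complete to $B_{1,p_1}$, and concatenating gives a chain of length $\ell$. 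The point is that the admissible loss in going from $\ell$ to $\ell-1$ is a factor $\big((t+\ell)/\ell\big)^{50}$, and these factors telescope: over the whole induction the accumulated loss is just $\binom{t+\ell}{t}^{50}$, which is precisely what that budget in $N$ is there to absorb — the same amortisation idea that appears in the proofs of Lemma~\ref{lem:slarge} and of Theorem~\ref{thm:pathvscliquegeneral}. When $r$ and $\ell$ are not too large this works directly, since then the minimum-degree bound above is close to $N$ and the common neighbourhood of $B_{1,p_1}$ is almost all of $[N]$.

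The main obstacle is the regime where $r$ (and hence $s$, as $r\le s$) is comparable to $t^2$: there the dense-regime minimum degree guaranteed above is only of order $N/t$, so the common neighbourhood of a $t$-clique can be a vanishingly small fraction of $N$, while the admissible loss per step $\big((t+\ell)/\ell\big)^{50}$ is close to $1$ and the reduction in $r$ is equally unaffordable. To handle this I would build long chains by \emph{merging} shorter ones produced on disjoint blocks: given $(s,t)$-chains on two consecutive blocks, applying Lemma~\ref{lem:findKt+t} to suitable halves of their adjacent end-blocks produces $t$-cliques $T,T'$ with $T\cup T'$ a clique, which lets one splice the two chains into one of the combined length, trimming the end-blocks by only the $2t$ vertices of $T\cup T'$ — so essentially \emph{no} size is lost. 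The price is that one must ensure the merged pair of end-blocks has bipartite density at least $\big(t/(r+t)\big)^2$; I would arrange this by running the recursion on many blocks at once, choosing the pair to merge adaptively, and extracting the required density from the no-independent-set-of-size-$r$ hypothesis by a short averaging argument, iterating the merge about $\log_2\ell$ times. Keeping the three budgets $\binom{t+s}{t}^{50}$ (paying for the blocks $A_i$), $\binom{t+\ell}{t}^{50}$ (paying for the chain length) and $\binom{t+r}{t}^{50}$ (paying for the Ramsey cost of finding cliques, reducing the independence number, and running the averaging arguments) in balance throughout is where the exponent $50$ is consumed, and verifying these inequalities — routine but lengthy — is the bulk of the work.
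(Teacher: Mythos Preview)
Your overall architecture --- double induction on $\ell$ and $r$, reduce $r$ by passing to a non-neighbourhood, and in the remaining ``dense'' case build long chains by merging shorter ones via Lemma~\ref{lem:findKt+t} --- matches the paper's. However, two concrete pieces are wrong or missing, and they are exactly the points where the work lies.

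First, the splice does not preserve length. When you apply Lemma~\ref{lem:findKt+t} to the last $A$-block of one chain and the first $A$-block of another, the $t$-cliques $T,T'$ you obtain are \emph{subsets} of those blocks; after splicing, $T$ and $T'$ become $B$-cliques and the two containing $A$-blocks are lost entirely, so two chains of length $m$ merge to one of length $2m-2$, not $2m$. This is why the paper recurses with $\lceil 2\ell/3\rceil$ (so that $2(\lceil 2\ell/3\rceil-1)\ge\ell$ for $\ell\ge 4$) and, crucially, why the base case has to be $\ell\le 3$ rather than $\ell=1$. That base case is nontrivial: the paper handles it by a clique-counting pigeonhole (Claim~\ref{claim_findingell2}) that finds a $k\in[2t,5t-3]$ with $m_{k+3}(G)$ much larger than $m_k(G)$, forcing a single $k$-clique $S$ with three ``external'' common neighbours in the right positions, which already encodes a length-$3$ chain.

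Second, your mechanism for guaranteeing the bipartite density $\ge(t/(r+t))^2$ needed by Lemma~\ref{lem:findKt+t} is the real gap. Finding chains in many blocks first and then ``choosing a pair adaptively by averaging'' does not obviously work: the end-block of one chain and the start-block of another are tiny sets of size $\binom{s+t}{s}^{10}$ sitting wherever the recursion happened to place them, and the no-independent-set hypothesis gives no control over edges between two such prescribed small sets. The paper avoids this by fixing the density \emph{before} the second recursion: split $[N]$ into $A=[\tfrac{t}{s+t}N]$ and $B$; either some $v\in A$ has $\ge(\tfrac{s}{s+t})^2N$ non-neighbours in $B$ (reduce $r$), or every $v\in A$ has $\ge\tfrac{st}{(s+t)^2}N$ neighbours in $B$. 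In the latter case, recurse inside $A$ to get a chain of length $\lceil 2\ell/3\rceil$, take its last block $X$, let $Z\subset B$ be the vertices with $\ge(\tfrac{t}{s+t})^2|X|$ neighbours in $X$ (so $|Z|\ge\tfrac{s^2t}{(s+t)^3}N$), and recurse \emph{inside $Z$} for the second chain --- now the first block of that chain is automatically a subset of $Z$ and hence has the required density to $X$. The inequality $\tfrac{s^2t}{(s+t)^3}\binom{t+\ell}{t}^{50}\ge\binom{t+\lceil 2\ell/3\rceil}{t}^{50}$ (Claim~\ref{claim:inequality}) is what makes both recursive calls affordable. Your $\ell\to\ell-1$ peeling step never enters the paper's argument; all of the $\ell$-reduction is done by this merge.
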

\begin{proof}
    We prove the statement by induction on $\ell+r$. For the case $\ell\leq 3$, we will need the following claim. As before, given a positive integer $k$, $m_k(G)$ denotes the number of cliques of size $k$ in $G$.

	\begin{claim}\label{claim_findingell2}
		There exists some $k\in[2t,5t-3]$ such that $m_{k+3}(G)> m_{k}(G)\cdot N^2(\binom{s+t}{s})^{10}$.
	\end{claim}
 \begin{proof}
If the claim fails, then we have
\begin{align*}
    m_{5t}(G)&\leq m_{2t}(G)\left(N^2\left(\binom{s+t}{s}\right)^{10}\right)^{t}\leq N^{4t}\left(\binom{s+t}{s}\right)^{10t}.
\end{align*}
Note that
$R(5t,s)\leq \binom{5t+s}{5t}\leq \left(\frac{t+s}{t}\right)^{4t}\binom{t+s}{t}\leq \left(\binom{t+s}{t}\right)^5<N/2$. Thus, by double counting,
\begin{align*}m_{5t}(G)\geq \frac{\binom{N}{R(5t,s)}}{\binom{N}{R(5t,s)-5t}}\geq \left(\frac{N/2}{R(5t,s)}\right)^{5t}\geq \left(\frac{N}{\left(\binom{t+s}{t}\right)^6}\right)^{5t}\geq N^{4t}\left(\binom{t+s}{t}\right)^{50t-30t},
		\end{align*}
		giving a contradiction.
	\end{proof}

    We now show that the statement of the lemma holds whenever $\ell\leq 3$. Pick $k$ as in Claim~\ref{claim_findingell2}. Since  $m_{k+3}(G)> m_{k}(G)\cdot N^2(\binom{s+t}{s})^{10}$, there exists a set $S$ of size $k$, inducing a clique, such that, writing $S_1$ for the set of $t$ smallest elements of $S$, there are at least $N^2(\binom{s+t}{s})^{10}$ cliques of size $k+3$ of the form $\{x,y,z\}\cup S$ with $\{x\}<S_1<\{y\}<(S\setminus S_1)<\{z\}$. Hence, there exist sets $X,Y,Z$, each of size at least $(\binom{s+t}{s})^{10}$, such that $X<S_1<Y<(S\setminus S_1)<Z$, and $X\cup Y\cup Z$ is complete to $S$. Taking an arbitrary subset $S_2\subseteq (S\setminus S_1)$ of size $t$, $X<S_1<Y<S_2<Z$ gives us an $(s,t)$-chain of length $3$, as claimed.

    Now assume that $\ell>3$.
    If $r=2$, then the statement holds trivially, so we may assume $r\geq 3$. Let $A=[\frac{t}{s+t}N]$ and $B=[N]\setminus A$. First consider the case when there is some vertex $v\in A$ such that there is a set $X\subseteq B$ with $|X|\geq \left(\frac{s}{s+t}\right)^2 N$ and $N(v)\cap X=\emptyset$. Then \begin{align*}
        |X|&\geq \left(\binom{t+s}{t}\binom{t+\ell}{t}\binom{t+r}{t}\right)^{50}\left(\frac{s}{s+t}\right)^2\\
        &\geq \left(\binom{t+s}{t}\binom{t+\ell}{t}\binom{t+r}{t}\right)^{50}\left(\frac{r}{r+t}\right)^2\\
        &\geq \left(\binom{t+s}{t}\binom{t+\ell}{t}\binom{t+r-1}{t}\right)^{50}.
    \end{align*}
    Moreover, $G[X]$ has no independent set of size $r-1$. Hence, by induction, $G[X]$ contains an $(s,t)$-chain of length $\ell$, as required.

    Thus, from now on, we may assume that for all $v\in A$, $|N(v)\cap B|\geq \frac{st}{(s+t)^2}N$.
    We will use the following inequality twice.

    \begin{claim} \label{claim:inequality}
        We have $$\frac{s^2t}{(s+t)^3}\left(\binom{t+\ell}{t}\right)^{50}\geq \left(\binom{t+\lceil\frac{2}{3}\ell\rceil}{t}\right)^{50}.$$
    \end{claim}

    \begin{proof}
    Using $\ell\leq s$ and $s\leq t^2$, we have
        \begin{align*}
    &\frac{s^2t}{(s+t)^3} \left(\binom{t+\ell}{t}\right)^{50} \geq \left(\frac{\ell}{\ell+t}\right)^2\frac{t}{t^2+t} \left(\left(\frac{t+\ell}{\ell}\right)^{\lfloor \ell/3\rfloor}\binom{t+\lceil \frac{2}{3}\ell\rceil}{t}\right)^{50}\\
    &\geq\frac{1}{t+1}\left(1+\frac{t}{\ell}\right)^{\ell} \left(\binom{t+\lceil \frac{2}{3}\ell\rceil}{t}\right)^{50}\geq \left(\binom{t+\lceil \frac{2}{3}\ell\rceil}{t}\right)^{50},
    \end{align*}
    which proves the claim.
    \end{proof}
    
    Using Claim \ref{claim:inequality} (and since $\frac{t}{s+t}\geq \frac{s^2t}{(s+t)^3}$), we have 
    \begin{align*}
    |A|&=\frac{t}{s+t} \left(\binom{t+s}{t}\binom{t+\ell}{t}\binom{t+r}{t}\right)^{50}\geq \left(\binom{t+s}{t}\binom{t+\lceil\frac{2}{3}\ell\rceil}{t}\binom{t+r}{t}\right)^{50}.
    \end{align*}
    Hence, by induction, $G[A]$ contains an $(s,t)$-chain of length $\lceil \frac{2}{3}\ell\rceil$, say $X_1<Y_{1,1}<\dots <Y_{1,p_1}<X_2<Y_{2,1}<\dots<Y_{2,p_2}<\dots<X_{\lceil \frac{2}{3}\ell\rceil}<Y_{\lceil \frac{2}{3}\ell\rceil,1}<\dots<Y_{\lceil \frac{2}{3}\ell\rceil,p_{\lceil \frac{2}{3}\ell\rceil}}$. Let $Z$ denote the set of vertices in $B$ which have at least $\left(\frac{t}{s+t}\right)^2|X_{\lceil \frac{2}{3}\ell\rceil}|$ neighbours in $X_{\lceil \frac{2}{3}\ell\rceil}$. Note that
    $$|X_{\lceil \frac{2}{3}\ell\rceil}|\cdot\frac{st}{(s+t)^2}N\leq e(X_{\lceil \frac{2}{3}\ell\rceil},B)\leq |X_{\lceil\frac{2}{3}\ell\rceil}||Z|+\left(\frac{t}{s+t}\right)^2|X_{\lceil \frac{2}{3}\ell\rceil}|\left(\frac{s}{s+t}N-|Z|\right),$$
    giving
    $$|Z|\geq \frac{\frac{s^2t}{(s+t)^3}N}{1-\frac{t^2}{(s+t)^2}}\geq \frac{s^2t}{(s+t)^3}N.$$

    Thus, by Claim \ref{claim:inequality},
    \begin{align*}
    |Z|&\geq\frac{s^2t}{(s+t)^3} \left(\binom{t+s}{t}\binom{t+\ell}{t}\binom{t+r}{t}\right)^{50}\geq \left(\binom{t+s}{t}\binom{t+\lceil \frac{2}{3}\ell\rceil}{t}\binom{t+r}{t}\right)^{50}.
    \end{align*}
    By induction, $G[Z]$ contains an $(s,t)$-chain of length $\lceil \frac{2}{3}\ell\rceil$, say $X_1'<Y_{1,1}'<\dots <Y_{1,q_1}'<X_2'<Y_{2,1}'<\dots<Y_{2,q_2}'<\dots<X_{\lceil \frac{2}{3}\ell\rceil}'<Y_{\lceil \frac{2}{3}\ell\rceil,1}'<\dots<Y_{\lceil \frac{2}{3}\ell\rceil,q_{\lceil \frac{2}{3}\ell\rceil}}'$.

    By the definition of $(s,t)$-chains, we know that $|X_{\lceil \frac{2}{3}\ell\rceil}|,|X_1'|\geq \left(\binom{s+t}{s}\right)^{10}$. Moreover, by the definition of $Z$, the edge density between $X_{\lceil \frac{2}{3}\ell\rceil}$ and $X_1'$ is at least $\left(\frac{t}{s+t}\right)^2$. Hence, by Lemma~\ref{lem:findKt+t}, there are subsets $Y\subseteq X_{\lceil \frac{2}{3}\ell\rceil}$ and $Y'\subseteq X_1'$, each of size $t$, such that $G[Y\cup Y']$ is complete. But then $X_1<Y_{1,1}<\dots <Y_{1,p_1}<X_2<Y_{2,1}<\dots<Y_{2,p_2}<X_3<\dots<Y_{\lceil \frac{2}{3}\ell\rceil-1,p_{\lceil \frac{2}{3}\ell\rceil-1}}<Y<Y'<Y_{1,1}'<\dots <Y_{1,q_1}'<X_2'<Y_{2,1}'<\dots<Y_{2,q_2}'<\dots<X_{\lceil \frac{2}{3}\ell\rceil}'<Y_{\lceil \frac{2}{3}\ell\rceil,1}'<\dots<Y_{\lceil \frac{2}{3}\ell\rceil,q_{\lceil \frac{2}{3}\ell\rceil}}'$ gives an $(s,t)$-chain of length $2\left(\lceil\frac{2}{3}\ell\rceil-1\right)\geq \ell$, finishing the proof.
\end{proof}

\section{Concluding remarks and open problems} \label{sec:remarks}

Answering a question of Gishboliner, Jin and Sudakov \cite{gishboliner2023ramsey}, we showed that the ordered Ramsey number of the $t$-th power of a path on $n$ vertices is at most $n^{4+o(1)}$ for any fixed $t$. 
The trivial lower bound gives $R_<(P_n^t,P_n^t)=\Omega(n^2)$. It would be interesting to find the correct value of the exponent. We believe that the truth is closer to the lower bound. 
\begin{problem}
    Is $R_<(P^t_n,P_n^t)=n^{2+o(1)}$ as $n\rightarrow \infty$ ?  
\end{problem}

Let us write $R_<^r(P_n^t)$ for the $r$-colour (ordered) Ramsey number of $P_n^t$. The proof of Theorem~\ref{theorem_pathvspath} can be straightforwardly generalised to show the following. 

\begin{theorem} \label{thm:multicolour}
    Let $r\geq 2$ and $t\geq 1$ be integers. Then there exist an absolute constant $C$ and a constant $D=D(r,t)$ such that $R^r_<(P_n^t)\leq D n^{C r\log r}$. 
\end{theorem}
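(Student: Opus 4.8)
The plan is to adapt the inductive proof of Theorem~\ref{theorem_pathvspath} (via Lemma~\ref{lemma_Q}) to the $r$-colour setting, replacing the two-colour majority argument with an $r$-colour pigeonhole step. First I would set up a multicolour analogue of $Q_m^{t,s}$: given positive integers $m$ and $s$, let $Q_m^{t,s}$ be as before (a monotone path on $m$ vertices with the endpoints blown up to $s$-cliques and the interior vertices blown up to $t$-cliques). Since $Q_n^{t,s}$ contains $P_n^t$ when $s\geq t$, it suffices to prove an $r$-colour bound of the form $R^r_<(Q_n^{t,s})\leq D(r,t)\, n^{\gamma(r)}$, where $\gamma(r)=C r\log r$ for an absolute constant $C$; then Theorem~\ref{thm:multicolour} follows with the same $D$ by taking $s=s(r,t)$ large.

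The heart of the argument is the following $r$-colour analogue of Lemma~\ref{lemma_Q}, proved by induction on $\ell+n$ (or, for the diagonal version, on $n$): there exist $s=s(r,t)$ and $D=D(r,t)$ so that for all $\ell,n$, $R^r_<(Q_\ell^{t,s},Q_n^{t,s},\dots)\leq D(\ell n)^{\beta(r)}$ for a suitable $\beta(r)$. The base case, $\min(\ell,n)\leq \ell_0(r)$, reduces to $R^r_<(K_{\ell_0 s},P_{sn}^{2s})\leq D' n$, which holds because the bound $R_<(K_p,P_m^q)=O_{p,q}(m)$ from \cite{gishboliner2023ramsey} (or even the weaker $O_{p,q}(m\log^{p-2}m)$ of \cite{mubayi2023ramsey}) generalises verbatim to finitely many colours with the path power fixed and only cliques in the other colours. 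For the inductive step on the complete graph $K_N$ with $N\approx D(\ell n)^{\beta(r)}$: split $[N]$ into a left half $L$ and a right half $R'$ of size $N/2$; by averaging, some colour $c$ and some $V_1\subseteq L$ with $|V_1|\geq N/(2r)$ has every vertex of $V_1$ sending $\geq N/(2r)$ edges of colour $c$ across to $R'$. Applying the induction hypothesis inside $V_1$ (whose size exceeds $R^r_<(Q_{\lceil\ell/2\rceil}^{t,s},Q_n^{t,s},\dots)$ once $\ell_0$ is large) yields either a finished copy in some colour $\neq c$, or a colour-$c$ copy of $Q_{\lceil\ell/2\rceil}^{t,s}$, whose last $s$ vertices form a set $S$; then, as in Lemma~\ref{lemma_Q}, the set $V_2\subseteq R'$ of vertices with $\geq\lambda s$ colour-$c$ neighbours in $S$ is still a constant fraction of $N$ (with $\lambda=\lambda(r,\beta)$), so induction inside $V_2$ gives a colour-$c$ copy of $Q_{\lfloor\ell/2\rfloor}^{t,s}$ with first $s$ vertices $S'$, and Kővári–Sós–Turán produces a colour-$c$ $K_{t,t}$ between $S$ and $S'$ (for $s=s(r,t)$ large), splicing the two halves into a colour-$c$ copy of $Q_\ell^{t,s}$. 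Tracking the recursion $N\mapsto N/(\text{const}(r))$ over the $\log_2(\ell n)$ levels of the halving gives $\beta(r)=O(\log r)$ in the product bound, hence $\gamma(r)=2\beta(r)+O(1)$ in the diagonal bound — but this only yields $n^{O(\log r)}$, not $n^{O(r\log r)}$.

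The reason the stated exponent is $Cr\log r$ rather than $C\log r$ is that a single application of the two-sided splicing only reduces a path of length $\ell$ to two pieces each of length $\approx\ell/2$ using one colour; to handle $r$ colours we iterate the reduction $r$ times, peeling off one colour at a time. Concretely, the cleanest route is probably a different induction: prove $R^r_<(Q_n^{t,s})\leq D(r,t)\cdot \big(R^{r-1}_<(Q_n^{t,s})\big)^{a}\cdot n^{b}$ for absolute constants $a,b$, by running the above left/right-majority argument to find a dominant colour $c$ and, inside each half, recursively finding an $(r-1)$-colour copy of $Q_{n/2}^{t,s}$ in the colours $\neq c$ or else building up the colour-$c$ copy of $Q_n^{t,s}$; unwinding this $r$-fold recursion from the one-colour base $R^1_<(Q_n^{t,s})=n$ multiplies $\log_2 n$ by a factor growing like $a^{r}$ or, with a more careful accounting of how the $s$-clique endpoints get consumed, like $r$ per level and $\log r$ from the halving — landing at the exponent $Cr\log r$.

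The main obstacle is exactly this bookkeeping: arranging the $r$-fold recursion so that (i) the endpoint cliques of size $s$ survive each colour-peeling step — which forces $s$ to be chosen after $r$ and $t$ but before the induction begins, and forces the $K_{t,t}$-extraction to happen at the very end of each colour's phase rather than in the middle — and (ii) the multiplicative loss per colour and the $\log(\ell n)$ factor from halving combine to give an exponent of the claimed order $r\log r$ rather than something exponential in $r$. I would expect the endpoint-survival issue to be the genuinely delicate point; everything else (the averaging, the $\lambda$-fraction computation, the Kővári–Sós–Turán application, and the base case) is a routine multicolour rerun of Section~\ref{sec:pathvspath}.
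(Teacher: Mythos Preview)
Your core idea---pigeonhole a dominant colour, apply induction in each half, and splice the two half-length copies via K\H{o}v\'ari--S\'os--Tur\'an---is exactly the ``straightforward generalisation'' the paper has in mind. The gap is in the \emph{parameterisation}, and it is what makes your exponent count go wrong.

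You state the $r$-colour lemma with only two length parameters $(\ell,n)$. But the induction cannot run that way: after pigeonholing, the dominant colour $c$ can be any of the $r$ colours, and you must halve the target length \emph{in colour $c$}, not in a fixed distinguished colour. (Your sentence ``Applying the induction hypothesis \dots\ yields \dots\ a colour-$c$ copy of $Q_{\lceil \ell/2\rceil}^{t,s}$'' tacitly assumes $c$ is the first colour.) The correct statement carries one length parameter per colour and asserts
\[
R^r_<\bigl(Q_{n_1}^{t,s},\dots,Q_{n_r}^{t,s}\bigr)\;\le\; D(r,t)\,(n_1 n_2\cdots n_r)^{\beta(r)},
\]
proved by induction on $\sum_i n_i$. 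In the step you lose a factor of order $r$ passing to $V_1$ and again to $V_2$, while halving $n_c$ buys back $2^{\beta}$; the recursion closes once $\beta(r)=O(\log r)$, exactly as you computed. The point you missed is that the diagonal case $n_1=\dots=n_r=n$ then gives
\[
R^r_<(P_n^t)\le D\, n^{\,r\beta(r)}=n^{O(r\log r)},
\]
because the product has $r$ factors, not two. So the exponent $Cr\log r$ drops out immediately; there is nothing extra to do.

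Consequently your third paragraph is unnecessary. The colour-peeling recursion you sketch there (reducing $r$ to $r-1$) is not how the factor $r$ arises, and as written it is not clearly sound: inside $V_1$ you still face an $r$-colouring, not an $(r-1)$-colouring, so you cannot directly invoke $R^{r-1}_<$; and if you could set up a recursion of the shape $R^r\le (R^{r-1})^a n^b$ with an absolute $a>1$, unwinding it would give an exponent exponential in $r$, not $r\log r$. A minor related slip: the base case you need is one \emph{clique} colour against $r-1$ \emph{path powers} (when some $n_i\le \ell_0$), not one path power against $r-1$ cliques.
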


This result together with an observation in \cite{Ramseysparsedigraphs} gives a tournament version of Theorem~\ref{theorem_pathvspath}.
Let $\overset{\rightarrow}{P}_n^t$ be the directed graph which is the $t$-th power of the directed path on $n$ vertices. 

\begin{theorem} \label{thm:tournament}
   Let $r\geq 2$ and $t\geq 1$ be integers. Then there exist an absolute constant $C'$ and a constant $D'=D'(r,t)$ such that every $r$-edge coloured tournament on $N$ vertices contains a monochromatic $\overset{\rightarrow}{P}_n^t$, provided that $N\geq D' n^{C'r\log r}$. 
\end{theorem}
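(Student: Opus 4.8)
The plan is to deduce Theorem~\ref{thm:tournament} from Theorem~\ref{thm:multicolour} via the standard device of encoding the orientations of a tournament as additional colours; this is precisely the observation of \cite{Ramseysparsedigraphs} alluded to above. Given an $r$-edge-coloured tournament $T$ on $N$ vertices, first fix an arbitrary linear order on $V(T)$ and identify $V(T)$ with $[N]$ accordingly. Define a colouring $\phi$ of the vertex-ordered complete graph on $[N]$, using the $2r$ colours $[r]\times\{\mathrm{fwd},\mathrm{bwd}\}$, by setting, for $u<v$, $\phi(\{u,v\})=(c(u,v),\mathrm{fwd})$ if the arc of $T$ between $u$ and $v$ is $u\to v$ and $\phi(\{u,v\})=(c(u,v),\mathrm{bwd})$ if it is $v\to u$, where $c(u,v)\in[r]$ denotes the colour of that arc. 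The point of this encoding is that a monochromatic monotone copy of $P_n^t$ in $\phi$ translates directly into a monochromatic $\overset{\rightarrow}{P}_n^t$ in $T$.

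I would then simply apply Theorem~\ref{thm:multicolour} with $2r$ colours in place of $r$. Writing $C$ for the absolute constant and $D(\cdot,\cdot)$ for the function appearing there, we have $R^{2r}_<(P_n^t)\le D(2r,t)\,n^{C\cdot 2r\log(2r)}$, and since $2r\log(2r)\le 4r\log r$ for every $r\ge 2$, the choice $C'=4C$ and $D'=D(2r,t)$ gives $N\ge D'n^{C'r\log r}\ge R^{2r}_<(P_n^t)$. Hence $\phi$ admits a monochromatic monotone copy of $P_n^t$; let its vertices be $w_1<w_2<\dots<w_n$ and let $(c_0,\sigma_0)$ be its common $\phi$-colour. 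For all $1\le i<j\le n$ with $j-i\le t$, the arc of $T$ between $w_i$ and $w_j$ has colour $c_0$, and it is directed $w_i\to w_j$ if $\sigma_0=\mathrm{fwd}$ and $w_j\to w_i$ if $\sigma_0=\mathrm{bwd}$. In the former case $(w_1,\dots,w_n)$ spans a copy of $\overset{\rightarrow}{P}_n^t$ in colour $c_0$; in the latter case $(w_n,w_{n-1},\dots,w_1)$ does, because reversing all arcs of $\overset{\rightarrow}{P}_n^t$ and relabelling $k\mapsto n+1-k$ recovers $\overset{\rightarrow}{P}_n^t$. Either way we obtain a monochromatic $\overset{\rightarrow}{P}_n^t$, completing the proof.

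I do not expect any genuine obstacle here: essentially all of the content sits in Theorem~\ref{thm:multicolour}, while the reduction above is elementary. The only points needing minor attention are the case split on the orientation bit $\sigma_0$, resolved by the reversal-symmetry of $\overset{\rightarrow}{P}_n^t$, and the constant bookkeeping that absorbs the doubling of the number of colours into the absolute constant $C'$ and the $(r,t)$-dependent constant $D'=D(2r,t)$. For completeness one should also record how Theorem~\ref{thm:multicolour} itself is obtained, namely by running the induction of Lemma~\ref{lemma_Q} with $r$ colours: across a balanced cut one of the $r$ colours, say red, has a set of $\Omega(N/r)$ vertices each sending $\Omega(N/r)$ red edges across; the induction (on the asymmetric quantity, with targets $Q^{t,s}$ of the relevant lengths in each colour) produces either a monochromatic target in some colour, in which case we are done, or two red structures of roughly half the length, between whose end-cliques the red density is still bounded below, so Kővári–Sós–Turán yields a red $K_{t,t}$ to splice them together; the resulting recursion $R(\ell_1,\dots,\ell_r)\le O(r)\cdot R(\lceil \ell_1/2\rceil,\ell_2,\dots,\ell_r)$, cycled over the colours and unwound, gives the bound $D(r,t)\,n^{O(r\log r)}$.
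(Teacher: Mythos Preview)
Your proof is correct and follows essentially the same route as the paper: fix an arbitrary linear order, split each of the $r$ colours into a forward and a backward variant to obtain a $2r$-colouring, apply Theorem~\ref{thm:multicolour}, and absorb the doubling via $2r\log(2r)\le 4r\log r$ to take $C'=4C$ and $D'=D(2r,t)$. Your treatment is in fact a bit more explicit than the paper's (you spell out the $\sigma_0=\mathrm{bwd}$ case via the reversal symmetry of $\overset{\rightarrow}{P}_n^t$ and verify the constant inequality), and the closing paragraph sketching the multicolour extension of Lemma~\ref{lemma_Q} is accurate though strictly beyond what is needed here.
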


Theorem \ref{thm:tournament} is already interesting for $r=2$. We remark that the problems of finding monochromatic directed paths in multicoloured tournaments and finding powers of paths in (uncoloured) tournaments have both been studied (see, e.g., \cite{chvatal1972monochromatic,gyarfas1973ramsey,draganic2021powers}).

To see how Theorem \ref{thm:tournament} follows from Theorem \ref{thm:multicolour}, let $C$ and $D'=D(2r,t)$ be constants chosen according to Theorem \ref{thm:multicolour}, and let $T$ be a tournament on $N\geq D' n^{C(2r)\log (2r)}$ vertices with an $r$-edge colouring $\chi$. Fix an arbitrary ordering of $V(T)$, say $[N]$. 
Now, given $x<y$ colour $(x,y)$ by $(\chi(xy),1)$ if $x\rightarrow y$ and by $(\chi(xy),2)$ otherwise. 
This is a $2r$-colouring of $[N]^{(2)}$. We may now invoke Theorem~\ref{thm:multicolour} to find a monochromatic ordered $P_n^t$ which corresponds to a monochromatic $\overset{\rightarrow}{P}_n^{t}$. So we may take $C'=4C$.

Another interesting problem is to better estimate the $r$-colour ordered Ramsey number of $P_n^t$. 
We conjecture the following. 
\begin{conjecture}
    For all $r,t,n$, $R_<^r(P^t_n)=O_{r,t}\left (n^{O(r)}\right)$.
\end{conjecture}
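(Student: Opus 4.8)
The plan is to mimic the proof of Theorem~\ref{theorem_pathvspath} in the $r$-colour setting. For a vector $(\ell_1,\dots,\ell_r)$ of positive integers and $s\geq t$, let $R_<(Q_{\ell_1}^{t,s},\dots,Q_{\ell_r}^{t,s})$ be the smallest $N$ such that every $r$-colouring of $E(K_N)$ contains a colour-$i$ copy of $Q_{\ell_i}^{t,s}$ for some $i$; since $Q_n^{t,s}\supseteq P_n^t$ for $s\geq t$, it suffices to bound this with all $\ell_i=n$. The recursive step would be exactly as in Lemma~\ref{lemma_Q}: split $[N]$ into two halves, pigeonhole a colour $i$ and a large set $V_1$ in the first half such that every vertex of $V_1$ sends many colour-$i$ edges across; apply induction inside $V_1$ to find either a colour-$j$ copy of $Q_{\ell_j}^{t,s}$ for some $j\neq i$ (and stop), or a colour-$i$ copy of $Q_{\lceil\ell_i/2\rceil}^{t,s}$; pass to a large set $V_2$ in the second half whose vertices have many colour-$i$ edges to the endpoint clique $S$ of this copy; recurse in $V_2$ for $Q_{\lfloor\ell_i/2\rfloor}^{t,s}$; and glue the two copies through a colour-$i$ copy of $K_{t,t}$ obtained from Kővári--Sós--Turán (taking $s=s(r,t)$ large). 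The base case is when some $\ell_i$ has been driven down to a bounded value, so that $Q_{\ell_i}^{t,s}$ sits inside a bounded clique; converting such colours into ``clique-target'' colours one is left with a multicolour clique-versus-path-powers estimate $R_<(P_n^t,K_{s_2},\dots,K_{s_r})=O_{r,t}(n)$, which should follow by a routine (if bookkeeping-heavy) adaptation of Section~\ref{sec:pathvscliquegeneral}, with the $(s,t)$-chain machinery and Lemma~\ref{lemma_sequence} carried over to $r-1$ clique colours. This recursion is essentially the proof of Theorem~\ref{thm:multicolour}: the total ``reduction budget'' is $\sum_i\log_2\ell_i\leq r\log_2 n$ halving steps, and each step loses only an $O(r)$ factor in the vertex count (from pigeonholing one colour out of $r$, and from the relevant crossing-degree being $\sim N/2r$ rather than $\sim N/4$), which yields the exponent $O(r\log r)$.

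To reach the conjectured exponent $O(r)$ one has to remove the $\log r$, and the natural idea is to make each recursive step divide the active colour's target by a factor $\Theta(r)$ rather than by $2$: then colour $i$ is ``active'' for only $\log_r\ell_i$ steps, so the number of steps drops to $\sum_i\log_r\ell_i\leq r\log n/\log r$, which cancels the $O(r)$ per-step loss and gives $n^{O(r)}$. Concretely, instead of a $2$-split one would partition $[N]$ into $m=\Theta(r)$ consecutive blocks $W_1<\dots<W_m$, identify a single colour $i$ which is simultaneously dense (say of density at least $r^{-2}$) in the bipartite graph between $W_j$ and $W_{j+1}$ for a \emph{consecutive} run of $\Theta(r)$ indices $j$, build a colour-$i$ copy of $Q^{t,s}_{\Theta(\ell_i/r)}$ inside a suitable dense subset of each $W_j$ in the run, connect consecutive copies by colour-$i$ copies of $K_{t,t}$ (again via Kővári--Sós--Turán), and thereby assemble a colour-$i$ copy of $Q^{t,s}_{\Theta(\ell_i)}$; for the colours $j\neq i$ one recurses inside the individual blocks as before.

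The main obstacle is exactly the production of a single colour that is dense along a \emph{consecutive} run of $\Theta(r)$ gaps: a crude pigeonhole over the $m-1$ gaps only yields a colour that is dense on average, or dense on $\Theta(r)$ gaps that may be scattered, and neither is enough to chain copies together. I expect one would need an iterative refinement of the block structure, zooming in until some colour genuinely dominates a long interval of consecutive gaps --- in the spirit of the density/counting increment inside the proof of Lemma~\ref{lem:slarge} --- or else a genuinely global argument extending the type-vector proof that gives $R_<^r(P_n)=(n-1)^r+1$ when $t=1$. The obstruction to the latter is that a $t$-th power of a path cannot be grown one vertex at a time, which is precisely what forces the $(s,t)$-chain machinery in the two-colour analysis; making $r$ colours interact through that machinery without paying a factor $r$ per colour is, I believe, the crux. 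By contrast, the recursive skeleton and the base case (modulo the bookkeeping noted above) are routine.
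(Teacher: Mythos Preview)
The statement you are attempting to prove is presented in the paper as an open \emph{conjecture}; the paper does not contain a proof of it. What the paper does prove is Theorem~\ref{thm:multicolour}, the weaker bound $R_<^r(P_n^t)\le D n^{Cr\log r}$, and your first paragraph is an accurate reconstruction of that argument (the paper says only that ``the proof of Theorem~\ref{theorem_pathvspath} can be straightforwardly generalised'', and your description of pigeonholing a colour, halving one $\ell_i$, and gluing via K\H{o}v\'ari--S\'os--Tur\'an is exactly that generalisation, yielding the $r\log r$ exponent from $r\log_2 n$ halving steps each costing an $O(r)$ factor).

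Your second and third paragraphs are not a proof of the conjecture but an honest discussion of why the recursion, as written, stalls at $r\log r$. The idea of replacing the $2$-split by an $m=\Theta(r)$-split so that each colour is active for only $\log_r \ell_i$ steps is natural, and you correctly isolate the obstruction: one needs a \emph{single} colour that is simultaneously dense across a \emph{consecutive} run of $\Theta(r)$ gaps, and naive pigeonholing over the $m-1$ gaps gives only a colour dense on a scattered set of gaps, which is useless for chaining. You do not overcome this, and neither does the paper --- that is precisely why it is stated as a conjecture. So the proposal contains a genuine gap at the key step, one you yourself flag; the suggested remedies (an increment-style refinement of the block structure, or a global type-vector argument extending the $t=1$ case) are plausible directions but are not carried out here, and there is no reason to believe either goes through without substantial new ideas.
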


\paragraph{Acknowledgements.} We are grateful to Alex Scott for bringing the problems studied in this paper to our attention, and for inviting the third author to Oxford.

\bibliographystyle{abbrv}
\bibliography{Bibliography}

\end{document}